\newcommand{\br}[3]{{$#1$}$\lower4pt\hbox{$\tp\atop\raise4pt \hbox{$\scriptscriptstyle{#2}$}$} ${$#3$}}
\newcommand{\tw}[3]{{$#1$}${\,\scriptscriptstyle {#2}}\atop\raise9pt\hbox{$\scriptstyle\tp$} ${$#3$}}
\newcommand{\ttps}[2]{{#1}\raise5pt\hbox{$\lower12pt\hbox{$\scriptstyle\tp$}\atop \lower0pt\hbox{$\tilde\;$}$}\raise4.5pt\hbox{${\scriptstyle{#2}}$}}
\newcommand{\st}[1]{\mbox{${\,\scriptscriptstyle {#1}}\atop\raise5.5pt\hbox{$*$}$}}
\newcommand{\rd}[1]{\mbox{${\,\scriptscriptstyle {#1}}\atop\raise5.5pt\hbox{$\bullet$}$}}
\newcommand{\rt}[1]{\otimes_\chi}
\newcommand{\lt}[1]{\mbox{${\,\scriptscriptstyle {#1}}\atop\raise5.5pt\hbox{$\ltimes$}$}}
\newcommand{\btr}{\raise1.2pt\hbox{$\scriptstyle\blacktriangleright$}\hspace{2pt}}
\newcommand{\btl}{\raise1.2pt\hbox{$\scriptstyle\blacktriangleleft$}\hspace{2pt}}
\newcommand{\lcr}{\raise1.0pt \hbox{${\scriptstyle\rightharpoonup}$}}
\newcommand{\rcr}{\raise1.0pt \hbox{${\scriptstyle\leftharpoonup}$}}
\newcommand{\ttp}{{\lower12pt\hbox{$\tp$}\atop \hbox{$\tilde\;$}}}
\newcommand{\id}{\mathrm{id}}
\newcommand{\Ru}{\mathcal{R}}
\newcommand{\Q}{\mathcal{Q}}
\newcommand{\C}{\mathbb{C}}
\newcommand{\tp}{\otimes}
\newcommand{\U}{U}
\newcommand{\ve}{\varepsilon}
\newcommand{\gm}{\gamma}
\newcommand{\dt}{\delta}
\newcommand{\la}{\lambda}
\newcommand{\End}{\mathrm{End}}
\newcommand{\Tr}{\mathrm{Tr}}
\newcommand{\diag}{\mathrm{diag}}
\newcommand{\g}{\mathfrak{g}}
\renewcommand{\b}{\mathfrak{b}}
\renewcommand{\k}{\mathfrak{k}}
\newcommand{\h}{\mathfrak{h}}
\newcommand{\s}{\mathfrak{s}}
\newcommand{\n}{\mathfrak{n}}
\newcommand{\nn}{\nonumber}
\newcommand{\p}{\mathfrak{p}}
\renewcommand{\l}{\mathfrak{l}}
\newcommand{\al}{\alpha}
\newcommand{\bt}{\beta}
\newcommand{\be}{\begin{eqnarray}}
\newcommand{\ee}{\end{eqnarray}}
\newtheorem{thm}{Theorem}[section]
\newtheorem{propn}[thm]{Proposition}
\newtheorem{lemma}[thm]{Lemma}
\newtheorem{corollary}[thm]{Corollary}
\theoremstyle{definition}
\newtheorem{remark}[thm]{Remark}
\newcommand{\parag}{\advance\prg by1 {\noindent\bf\thesection.\the\prg\hspace{6pt}}}
\begin{document}
\title{Quantum sphere $\mathbb{S}^4$ as a non-Levi conjugacy class}
\author{A.~Mudrov \\
\small Department of Mathematics, \\
\small University of Leicester,
\\
\small University Road,
Leicester,    LE1 7RH, UK.\\
\small e-mail: am405@le.ac.uk\\
}
\date{ }

\maketitle
\begin{abstract}
We construct a $U_\hbar\bigl(\s\p(4)\bigr)$-equivariant quantization of the four-dimensional
complex sphere $\mathbb{S}^4$ regarded as a conjugacy class,  $Sp(4)/Sp(2)\times Sp(2)$,
 of a simple complex group with non-Levi isotropy subgroup,
through an operator realization of the quantum polynomial
algebra $\C_\hbar[\mathbb{S}^4]$ on a highest weight module of $U_\hbar\bigl(\s\p(4)\bigr)$.
\end{abstract}
{\small \underline{Key words}:  quantum groups, quantization, Verma modules.}
\\
{\small \underline{AMS classification codes}: 17B10, 17B37, 53D55.}

\section{Introduction}
There are two types of closed conjugacy classes in a simple complex algebraic group $G$.
One type consists of classes that are isomorphic to orbits in the adjoint representation
on the Lie algebra $\g$. They are homogeneous spaces of $G$ whose  stabilizer of the initial point
is a Levi subgroup in $G$.
 Our concern is equivariant quantization of classes of second type, i. e. whose isotropy subgroup {\em is not} Levi.
Regarding the classical series, such classes  are present only in the orthogonal and symplectic groups.

The group $G$ supports a (Drinfeld-Sklyanin) Poisson bivector field $\pi_0\in \Lambda^2(G)$ associated with a solution of the classical Yang-Baxter
equation. This structure makes $G$ a Poisson group, whose multiplication $G\times G\to G$ is a Poisson  map
(here $G\times G$ is equipped with the Poisson structure of Cartesian product).
The Drinfeld-Sklyanin bracket gives rise to the quantum group $U_\hbar(\g)$, which is a deformation,
along the parameter $\hbar$, of the universal enveloping algebra $U(\g)$ in the class of Hopf algebras, \cite{D}.

There is a Poisson structure $\pi_1\in \Lambda^2(G)$  compatible
with the conjugacy action of the Poisson group on itself, \cite{S}.
It means that action map from the Cartesian product of  $(G,\pi_0)$ and $(G,\pi_1)$
to $(G,\pi_1)$ is Poisson. Then $G$ is said to be a Poisson space
over the Poisson group $G$, under the conjugacy action

The Poisson bivector field  $\pi_1$ restricts to every closed conjugacy class making it a Poisson $G$-variety, \cite{AM}. In this sense,
the group $G$ is analogous to $\g\simeq \g^*$ equipped with the canonical $G$-invariant bracket.

Quantization of conjugacy classes with Levi isotropy subgroups has been constructed in various settings, namely,
as a star product and in terms of generators and relations, \cite{EEM,M2}.
Both approaches rely upon the representation theory of the quantum group $U_\hbar(\g)$ and make use of
 the following
facts: a) the universal enveloping algebra  $U(\l)$  of the isotropy subgroup is quantized to a Hopf subalgebra
$U_\hbar(\l)\subset U_\hbar(\g)$,
b) there is a triangular factorization of $U(\g)$ relative to  $U(\l)$, which amounts
to a factorization of quantum groups and  facilitates parabolic induction.
In particular, quantum conjugacy classes of the Levi type have been realized by operators on scalar parabolic Verma modules
 in \cite{M2}.

The above mentioned conditions are violated for non-Levi conjugacy classes, which makes the conventional
methods of quantization inapplicable in this case. In this paper, we show how to overcome these obstructions for the
simplest non-Levi  conjugacy class $Sp(4)/Sp(2)\times Sp(2)$. This is the class of symplectic invertible
$4\times 4$-matrices
with eigenvalues $\pm 1$, each of multiplicity $2$. As an affine variety, it
is isomorphic to the four-dimensional complex sphere $\mathbb{S}^4$. Although the quantization of $\mathbb{S}^4$
can be obtained by other methods, e. g. as in \cite{FRT}, we are interested in  $\mathbb{S}^4$
as an illustration of our approach to a  general non-Levi class.

The idea is to find a suitable highest weight $U_\hbar(\g)$-module where the quantum sphere could be
represented by linear operators. We consider an auxiliary parabolic Verma module $\hat M_\la$ as a starting point. For a
special value of
weight $\la$,  the module $\hat M_\la$ has a singular vector generating a submodule in $\hat M_\la$. The quotient $ M_\la$
of $\hat M_\la$ over that submodule is irreducible. The deformation of the polynomial algebra
$\C[\mathbb{S}^4]$ is realized  by a $U_\hbar(\g)$-invariant subalgebra in $\End(M_\la)$.
This also allows us to describe the quantized polynomial algebra $\C_\hbar[\mathbb{S}^4]$ in terms of generators and relations.

Irreducibility of $M_\la$ implies non-degeneracy of the Shapovalov form on it. In the simple case of
$Sp(4)/Sp(2)\times Sp(2)$ this form can be calculated explicitly. This provides a bi-differential
operator relating the multiplication in $\C_\hbar[\mathbb{S}^4]$ to the multiplication in the dual Hopf algebra
$U_\hbar^*(\g)$, as explained in \cite{KST}.

We start from  description of the classical conjugacy class $Sp(4)/Sp(2)\times Sp(2)$ and the Poisson structure on it.
Next we collect the necessary facts about the quantum group $U_\hbar(\g)$. Further we describe the quantization of
the polynomial algebra  $\C_\hbar[G]$ and its properties. After that  we construct the module $M_\la$
and analyze the submodule structure of the tensor product $\C^4\tp M_\la$. This allows us to realize
 $\C_\hbar[\mathbb{S}^4]$
by operators on $M_\la$ and describe it in generators and relations. In conclusion, we calculate the
invariant pairing between $M_\la$ and its dual and discuss the star product on $\C_\hbar[\mathbb{S}^4]$.

\section{The classical conjugacy class $Sp(4)/Sp(2)\times Sp(2)$}
Let $Sp(4)$ denote the complex algebraic group of matrices preserving
the antisymmetric skew-diagonal bilinear form $C_{ij}=\epsilon_{i}\delta_{ij'}$, where $i'=5-i$,
 $(\epsilon_1,\epsilon_2,\epsilon_3,\epsilon_4)=(1,1,-1,-1)$, and $\dt_{ij}$ is the Kronecker symbol.
We are interested in the conjugacy class of symplectic matrices with eigenvalues
$\pm1$ each of multiplicity $2$. It is an $Sp(4)$-orbit with respect to the conjugation action
on itself.
The initial point $A_o$ of the class
and its isotropy subgroup can be taken as
$$
A_o=
\left(
\begin{array}{cccc}
-1&0&0&0\\
0&1&0&0\\
0&0&1&0\\
0&0&0&-1\\
\end{array}
\right),
\quad
Sp(2)\times Sp(2)=
\left(
\begin{array}{cccc}
*&0&0&*\\
0&*&*&0\\
0&*&*&0\\
*&0&0&*\\
\end{array}
\right)\subset Sp(4).
$$
This conjugacy class is a subvariety in $Sp(4)$
defined by the system of equations
\be
ACA^t-C=0, \quad \Tr(A)=0, \quad A^2-1=0,
\label{ideal_class}
\ee
where $1$ in the third equality is the matrix unit.
This is a system of polynomial equations on the matrix coefficients  $A_{ij}$, which can be written in
an alternative way:
$$
A^t+CAC=0,  \quad \Tr(A)=0, \quad A^2-1=0.
$$
The first two equations are linear and allow for the following non-zero entries:
$$
A=
\left(
\begin{array}{cccc}
a&b&y&0\\
c&-a&0&-y\\
z&0&-a&b\\
0&-z&c&a\\
\end{array}
\right).
$$
The quadratic equation is then equivalent to
\be
\label{sphere}
a^2+bc+yz-1=0.
\ee
Thus, the conjugacy class of $A_o$ is isomorphic to the complex sphere
$\mathbb{S}^4$. The ideal generated by the entries of the matrix equations
(\ref{ideal_class}) along with
the zero trace condition  is, in fact, generated by a single irreducible
polynomial and is the defining ideal of the class.

Consider the r-matrix
$$
r=\sum_{i=1}^4(e_{ii}\tp e_{ii}-e_{ii}\tp e_{i'i'})
+2\sum_{i,j=1\atop i>j}^4(e_{ij}\tp e_{ji}-\epsilon_{i}\epsilon_{j}e_{ij}\tp e_{i'j'})\in \s\p(4)\tp \s\p(4)
$$
solving the classical Yang-Baxter equation, \cite{D}. It induces a Drinfeld-Sklyanin bivector field $\pi_0$ on $Sp(4)$ making it a Poisson group, \cite{D}.
We are concerned with the following Poisson structure, $\pi_1$, on $Sp(4)/Sp(2)\times Sp(2)\simeq \mathbb{S}^4$:
\be
\{A_1,A_2\}=\frac{1}{2}(A_2r_{21}A_1-A_1rA_2+A_2A_1r-r_{21}A_1A_2).
\label{poisson_br}
\ee
This equation is understood in $\End(\C^4)\tp \End(\C^4)\tp \C[\mathbb{S}^4]$ and  is a shorthand matrix form
of the system of $n^2\times n^2$ identities defining the Poisson brackets $\{A_{ij}, A_{kl}\}$ of the coordinate
functions. The subscripts indicate the
copy of $\End(\C^4)$ in the tensor square, as usual in the quantum group literature.
Explicitly,  the  brackets of the  generators $a,b,c,y,z\in \C[\mathbb{S}^4]$ read
$$
\{a,b\}=ab,\quad \{a,c\}=-ac, \quad\{a,y\}=ay,\quad\{a,z\}=-az,
$$
$$
\{b,y\}=by,\quad\{b,z\}=-bz,\quad
\{c,y\}=cy,\quad\{c,z\}=-cz,
$$
$$
\{y,z\}=2a^2+2bc,\quad\{b,c\}=2a^2.
$$
This Poisson structure restricts from $Sp(4)$ and makes
$\mathbb{S}^4$ a Poisson manifold under the conjugacy action of the Poisson group $Sp(4)$, \cite{S}.
In can be shown that such a Poisson structure on $\mathbb{S}^4$ is unique.
\section{Quantum group $U_\hbar(\s\p(4)$)}
\label{QG}
Throughout the paper, $\g$ stands for the Lie algebra $\s\p(4)$.
We are looking for quantization of the polynomial algebra $\C[\mathbb{S}^4]$
along the Poisson bracket (\ref{poisson_br}) that is invariant
under an action of the quantized universal enveloping algebra $U_\hbar(\g)$.
In this section we recall the definition of $U_\hbar(\g)$, following  \cite{D}.

The root system of $\g$ is generated by
the simple positive roots $\al$, $\bt$, which are defined  in the orthogonal
basis $\ve_1$, $\ve_2$ as
$$\al=\ve_1-\ve_2,\quad \bt=2\ve_2.$$
The other positive roots are
$\gm=\al+\bt$ and $\dt=2\al+\bt$.
Root vectors and Cartan elements are represented by the matrices
\be
\begin{array}{c}
e_\al=e_{12}-e_{34},\quad e_\bt=e_{23},\quad e_\gm=e_{13}+e_{24},\quad e_\dt=e_{14},
\\
f_\al=e_{21}-e_{43},\quad f_\bt=e_{32},\quad f_\gm=e_{31}+f_{42},\quad f_\dt=e_{41},
\\
h_\al=e_{11}-e_{22}+e_{33}-e_{44},\quad h_\bt=2e_{22}-2e_{33},
\end{array}
\label{C4rep}
\ee
where $\{e_{ij}\}$ is the standard matrix basis.

The quantized universal enveloping algebra (quantum group) $U_\hbar(\g)$
is a $\C[[\hbar]]$-algebra generated by the elements
$e_\al,e_\bt,f_\al, f_\bt, h_\al,h_\bt$
subject
to the  commutator relations
$$
[h_{\al},e_{\al}]=2 e_{\al},
\quad
[h_{\al},f_{\al}]=-2 f_{\al},
\quad
[h_{\bt},e_{\bt}]=4 e_{\bt},
\quad
[h_{\bt},f_{\bt}]=-4 f_{\bt},
$$
$$
[h_{\al},e_{\bt}]=-2 e_{\bt},
\quad
[h_{\al},f_{\bt}]= 2 f_{\bt},
\quad
[h_{\bt},e_{\al}]=- 2 f_{\al},
\quad
[h_{\bt},f_{\al}]= 2 f_{\al},
$$
$$
[e_{\al},f_{\al}]=\frac{q^{h_{\al}}-q^{-h_{\al}}}{q-q^{-1}},
\quad
[e_{\al},f_{\bt}]=0=[e_{\bt},f_{\al}],
\quad
[e_{\bt},f_{\bt}]=\frac{q^{h_{\bt}}-q^{-h_{\bt}}}{q^2-q^{-2}},
$$
plus the Serre relations
$$
e_{\al}^{3}
e_{\bt}
-
(q^2+1+q^{-2})e_{\al}^{2}
e_{\bt}e_{\al}
+
(q^2+1+q^{-2})e_{\al}
e_{\bt}e_{ \al}^{2}
-
e_{\bt}e_{\al}^{3}
=0
,
$$
$$
e_{\bt}^{2}
e_{\al}
-
(q^2+q^{-2})
e_{ \bt}e_{ \al}e_{\bt}
+
e_{\al}e_{\bt}^{2}
=0,
$$
and similar relations for $f_\al$, $f_\bt$.
Here and further on $q=e^\hbar$.

The comultiplication $\Delta$ and antipode $\gm$ are defined on the generators by
$$
\Delta(h)=h\tp 1+1\tp h, \quad \gm(h)=-h, \quad h\in \h,
$$
$$
\Delta(e_\mu)=e_\mu\tp 1+q^{h_\mu}\tp e_\mu, \quad \gm(e_\mu)=-q^{-h_\mu}e_\mu,
\quad \mu=\al,\bt,
$$
$$
\Delta(f_\mu)=f_\mu\tp q^{-h_\mu}+1\tp f_\mu, \quad \gm(f_\mu)=-f_\mu q^{h_\mu},
\quad \mu=\al,\bt.
$$
The counit homomorphism $\ve\colon U_\hbar(\g)\to \C[[\hbar]]$ is nil on the generators.

\begin{remark}
The quantum group $U_\hbar(\g)$ is regarded  as a $\C[[\hbar]]$-algebra, bearing in mind its application to
deformation quantization.
Accordingly, all its modules are understood as free $\C[[\hbar]]$-modules. However, we will suppress the
reference to $\C[[\hbar]]$ in order to
simplify the formulas. For instance, the vector representation of $U_\hbar(\g)$ will be denoted simply as $\C^4$.
The tensor products and linear maps are also understood over $\C[[\hbar]]$.
\end{remark}

Let us introduce higher root vectors $e_\gm, f_\gm, e_\dt, f_\dt \in U_\hbar(\g)$ (the coincidence in the notation for the weight and the antipode should not cause a confusion) by
$$
f_{\gm}=f_{\bt}f_{\al}-q^{-2} f_{\al}f_{\bt},
,\quad
f_{\dt}=f_{\gm}f_{\al}-q^{2}f_{\al}f_{\gm},
$$
$$
e_{\gm}=e_\al e_\bt-q^{2}e_\bt e_\al, \quad
e_{\dt}=e_\al e_\gm-q^{-2}e_\gm e_\al.
$$
Our definition of  $e_\dt, f_\dt$ is different from the usual definition
 $e_\dt=[e_\al,e_\gm]$,  $f_\dt=[f_\gm,f_\al]$, corresponding to $(\al,\gm)=0$, \cite{ChP}. The reason for that
will be clear later on.
The elements $h_\al, h_\bt$ span the Cartan subalgebra $\h$
and generate the Hopf subalgebra $U_\hbar(\h)\subset U_\hbar(\g)$.
The vectors $e_\al,e_\bt$ along with $\h$ generate
the positive Borel  subalgebra $U_\hbar(\b^+)$
in $U_\hbar(\g)$. Similarly, $f_\al,f_\bt$, and $\h$ generate
the negative Borel  subalgebra $U_\hbar(\b^-)$. They are
Hopf subalgebras of $U_\hbar(\g)$.
\begin{lemma}
The root vectors satisfy the relations
$$
e_{ \gm}e_{ \bt}
-
q^{-2}e_{ \bt}e_{ \gm}
=0
, \quad
[e_{ \al},e_{ \dt}]
=0
,\quad [e_{\bt},e_{\dt}]=0
,\quad
[e_{ \gm},e_{ \dt}]
=0,
$$
$$
f_{\bt}f_{\gm}-q^{2}f_{\gm}f_{\bt}
=0
,\quad
[f_{\al},f_{\dt}]=0
,\quad
[f_{\bt},f_{\dt}]=0
,\quad
[f_{\gm},f_{\dt}]=0.
$$
\label{Borel}
\end{lemma}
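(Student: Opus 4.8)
The plan is to verify the eight relations in Lemma~\ref{Borel} by direct computation from the defining relations of $U_\hbar(\g)$ and the explicit expressions for the higher root vectors $e_\gm, e_\dt, f_\gm, f_\dt$ introduced above. Since the statement is manifestly symmetric under the Chevalley involution exchanging $e_\mu \leftrightarrow f_\mu$ and $q \leftrightarrow q^{-1}$, it suffices to establish the four relations among the $e$-vectors; the four $f$-relations then follow by applying this antiautomorphism. So I would concentrate on the positive Borel subalgebra $U_\hbar(\b^+)$, where all the needed data are the $q$-Serre relations and the definitions $e_\gm = e_\al e_\bt - q^2 e_\bt e_\al$ and $e_\dt = e_\al e_\gm - q^{-2} e_\gm e_\al$.

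The first relation $e_\gm e_\bt - q^{-2} e_\bt e_\gm = 0$ I would obtain by substituting $e_\gm = e_\al e_\bt - q^2 e_\bt e_\al$ and expanding, reducing everything to a combination of the monomials $e_\al e_\bt^2$, $e_\bt e_\al e_\bt$, $e_\bt^2 e_\al$; the short $q$-Serre relation $e_\bt^2 e_\al - (q^2 + q^{-2}) e_\bt e_\al e_\bt + e_\al e_\bt^2 = 0$ should make the resulting expression vanish identically. For the three commutators involving $e_\dt$, the cleanest route is first to rewrite $e_\dt$ in a more convenient normal form. Note that $e_\dt = e_\al e_\gm - q^{-2} e_\gm e_\al = e_\al(e_\al e_\bt - q^2 e_\bt e_\al) - q^{-2}(e_\al e_\bt - q^2 e_\bt e_\al)e_\al$, which after collecting terms is a $\C[[\hbar]]$-combination of $e_\al^2 e_\bt$, $e_\al e_\bt e_\al$, $e_\bt e_\al^2$. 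The long $q$-Serre relation $e_\al^3 e_\bt - (q^2+1+q^{-2})e_\al^2 e_\bt e_\al + (q^2+1+q^{-2})e_\al e_\bt e_\al^2 - e_\bt e_\al^3 = 0$ then governs commutation of $e_\dt$ with $e_\al$, giving $[e_\al, e_\dt] = 0$.

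For $[e_\bt, e_\dt] = 0$ and $[e_\gm, e_\dt] = 0$ I would proceed similarly, expressing both factors in terms of the degree-graded monomials in $e_\al, e_\bt$ and reducing modulo both Serre relations until the commutator collapses. The relation $[e_\gm, e_\dt] = 0$ can alternatively be deduced more economically from the two preceding ones: since $e_\gm$ is a $q$-commutator of $e_\al$ and $e_\bt$, once $e_\dt$ is known to commute with both $e_\al$ and $e_\bt$ it should commute with any bracket polynomial in them, and in particular with $e_\gm$; this avoids a third lengthy monomial reduction. The main obstacle is bookkeeping: each commutator expands into sums of noncommutative monomials of degree three or four in $e_\al, e_\bt$, and one must repeatedly apply the $q$-Serre relations in the correct positions to force cancellation. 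The nonstandard choice of $e_\dt$ (with coefficient $q^{-2}$ rather than the usual commutator) is precisely what makes these higher commutators vanish, so tracking the $q$-powers carefully is essential and is where an error would most easily creep in.
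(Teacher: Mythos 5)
Your proposal is correct and follows essentially the same route as the paper: the first relation and $[e_\al,e_\dt]=0$ are the two $q$-Serre relations rewritten in terms of $e_\gm$ and $e_\dt$, the relation $[e_\gm,e_\dt]=0$ is deduced from the other two because $e_\gm$ is a polynomial in $e_\al,e_\bt$, the remaining commutator $[e_\bt,e_\dt]=0$ is checked directly, and the $f$-line mirrors the $e$-line (the paper treats it in parallel, you via the explicit antiautomorphism sending $e_\gm\mapsto f_\gm$, $e_\dt\mapsto f_\dt$). The only cosmetic difference is that for $[e_\bt,e_\dt]=0$ the paper avoids full expansion into $e_\al,e_\bt$ monomials by keeping $e_\gm$ intact and reusing the already-established relation $e_\gm e_\bt=q^{-2}e_\bt e_\gm$, which shortens the bookkeeping you anticipated.
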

\begin{proof}
The first two equalities in both  lines are simply a rephrase of the Serre relations
in the new terms. The last equalities follow from the second and third.
Let us check the third equality, say, in the first line:
\be
e_{\bt}e_{\dt}&=&
e_{\bt}(e_{\al}e_{\gm}-q^{-2} e_{\gm} e_{\al})=e_{\bt}e_{\al}e_{\gm}- e_{\gm}e_{\bt} e_{\al}
\nn\\
&=&
q^{-2}e_{\al}e_{\bt}e_{\gm}-q^{-2}e_{\gm}^2- q^{-2}e_{\gm} e_{\al}e_{\bt}+ q^{-2}e_{\gm}^2
=
q^{-2}e_{\al}e_{\bt}e_{\gm}- q^{-2}e_{\gm} e_{\al}e_{\bt}
\nn\\
&=&
e_{\al}e_{\gm}e_{\bt}- q^{-2}e_{\gm} e_{\al}e_{\bt}=e_{\dt}e_{\bt},
\nn
\ee
as required.
\end{proof}
Denote by $U_\hbar(\n^\pm_0)$ the subalgebras generated by, respectively, positive and
negative Chevalley generators. The Borel subalgebras $\U_\hbar(\b^\pm)$ are freely generated by $\U_\hbar(\n^\pm_0)$ over $\U_\hbar(\h)$ with respect to right or left multiplication.
These equalities facilitate the following
\begin{corollary}
The positive (respectively, negative) root vectors generate a Poincar\`{e}-Birkgoff-Witt
basis in $\U_\hbar(\n^\pm_0)$.
\label{PBW}
\end{corollary}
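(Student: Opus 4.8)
The plan is to exhibit an explicit ordered set of monomials in the four positive root vectors and to verify the two properties of a basis separately: spanning, via the commutation rules of Lemma~\ref{Borel}, and linear independence, via reduction to the classical limit. Fix the convex order $\al<\dt<\gm<\bt$ on the positive roots, so that $\dt=\al+\gm$ lies between $\al$ and $\gm$ and $\gm=\al+\bt$ lies between $\al$ and $\bt$, and call a monomial \emph{ordered} if it has the form $e_\al^{a}e_\dt^{b}e_\gm^{c}e_\bt^{d}$ with $a,b,c,d\in\N$. I claim that the ordered monomials form a basis of $\U_\hbar(\n^+_0)$; the case of $\U_\hbar(\n^-_0)$ is identical with $e$ replaced by $f$ throughout.

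\textbf{Spanning.} The definitions of $e_\gm$ and $e_\dt$ give the straightening rules
\be
e_\al e_\bt&=&e_\gm+q^{2}e_\bt e_\al,\qquad
e_\al e_\gm=e_\dt+q^{-2}e_\gm e_\al,\nn
\ee
and Lemma~\ref{Borel} supplies $e_\bt e_\gm=q^{2}e_\gm e_\bt$ together with the centrality of $e_\dt$ in $\U_\hbar(\n^+_0)$. Using these I would rewrite an arbitrary product of $e_\al,e_\bt$ as a $\C[[\hbar]]$-combination of ordered monomials: each move either commutes a disordered adjacent pair at the cost of a scalar, or replaces a disordered pair $e_\bt e_\al$, $e_\gm e_\al$ by the single higher root vector $e_\gm$, $e_\dt$, or slides the central $e_\dt$ into place. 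The only delicate point is termination. All three moves preserve the root-lattice degree while strictly decreasing the pair (number of generator-letters, number of disordered adjacent pairs) in the lexicographic order; since this statistic is well-founded, the rewriting terminates, and as $e_\al,e_\bt$ generate $\U_\hbar(\n^+_0)$ the ordered monomials span.

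\textbf{Independence.} Here I would pass to the classical limit. The algebra $U_\hbar(\g)$ is a topologically free $\C[[\hbar]]$-module whose reduction modulo $\hbar$ is $U(\g)$, and under this reduction $e_\gm,e_\dt$ become the classical commutators $[e_\al,e_\bt]$, $[e_\al,e_\gm]$, i.e.\ the Chevalley root vectors of $\n^+$. Hence each ordered monomial reduces mod $\hbar$ to the corresponding classical ordered monomial, and these constitute the Poincar\'e--Birkhoff--Witt basis of $U(\n^+)$. I would argue weight by weight: within each root-lattice-graded component the finitely many ordered monomials of that weight span a free $\C[[\hbar]]$-module and reduce mod $\hbar$ to linearly independent elements of $U(\n^+)$; by Nakayama over the local ring $\C[[\hbar]]$ they are therefore $\C[[\hbar]]$-independent, which completes the proof.

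I expect the termination of the straightening procedure to be the only genuinely delicate step, precisely because $e_\gm$ and $e_\dt$ are quadratic and cubic in the Chevalley generators, so that a naive induction on word length fails; refining the length order first by root-lattice degree and then by the number of disordered pairs is what makes the rewriting well-founded. Once spanning is in place, independence is essentially forced by flatness of the deformation together with the classical Poincar\'e--Birkhoff--Witt theorem.
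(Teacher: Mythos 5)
Your proposal is correct in substance but takes a genuinely different route from the paper. The paper does not redo PBW theory from scratch: it quotes the known quantum PBW theorem \cite{ChP} for the \emph{standard} ordered system $f_\al,\, f'_\dt=[f_\gm,f_\al],\, f_\gm,\, f_\bt$, and then observes that the paper's non-standard root vector satisfies $f'_\dt=f_\dt+(q^{2}-1)f_\al f_\gm$; combined with the commutation relations of Lemma \ref{Borel}, this substitution gives a triangular, hence invertible, transformation between the two systems of ordered monomials, so the new monomials inherit linear independence and completeness from the standard ones. Your argument is instead self-contained: spanning by a straightening algorithm, independence by reduction modulo $\hbar$ to the classical PBW basis of $U(\n^+)$ plus flatness of the deformation. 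The paper's route buys brevity and isolates exactly where the non-standard choice of $e_\dt,f_\dt$ enters (a single change of variables); your route buys independence from the quantum PBW theorem (only the classical one is used) at the cost of redoing the rewriting theory. Your independence step, the ``divide by the largest power of $\hbar$'' argument over $\C[[\hbar]]$ applied weight by weight, is sound.

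There is, however, one step in your spanning argument that fails as literally stated: the measure (word length, number of disordered \emph{adjacent} pairs) does not strictly decrease under the commutation move. For example, applying $e_\bt e_\al\mapsto q^{-2}e_\al e_\bt-q^{-2}e_\gm$ inside the word $e_\gm e_\bt e_\al$ yields, in the commuted term, $e_\gm e_\al e_\bt$, which still has exactly one disordered adjacent pair, so the measure stays at $(3,1)$. The standard repair is to count \emph{all} inversions, i.e.\ pairs of positions $i<j$ whose letters are out of order, adjacent or not: an adjacent transposition changes the status of only the transposed pair, so it lowers the inversion number by exactly one, while the contraction moves $e_\bt e_\al\to e_\gm$ and $e_\gm e_\al\to e_\dt$ lower the length. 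With the lexicographic measure (length, total inversions) every term produced by every move is strictly smaller, the rewriting terminates, and the rest of your proof goes through unchanged.
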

\begin{proof}
The presence of PBW basis in the quantum group is a well  known fact. However,
we use a non-standard definition of the root vectors $e_\dt$, $f_\dt$, therefore the lemma is substantial.
To prove it, say, for $U_\hbar(\n^-_0)$ one should check that the system of monomials $f_\al^a f_\gm^c f_\dt^d f_\bt^b=f_\al^a f_\dt^d f_\gm^c  f_\bt^b$, where
$a$, $b$, $c$, and $d$ are non-negative integers, is linearly independent  and  complete in $\U_\hbar(\n^-)$.
The ordered sequence of the elements $f_\al,f_\dt'=[f_\gm,f_\al],f_\gm,f_\bt$ does generate a PBW
basis, \cite{ChP}. Using this fact  along with Lemma \ref{Borel} relations, one can easily check the statement
via the substitution
$f'_\dt=f_\dt+(q^{2}-1)f_\al f_\gm$.
\end{proof}
%The Hopf algebra $U_\hbar(\g)$ is quasitriangular, however we do not use the explicit form of the universal
%R-matrix in this presentation. Some facts about $\Ru$  needed for our purposes are collected in \cite{M2}.
\section{The algebra of quantized polynomials on $Sp(4)$}
\label{S_qG}
We adopt the  convention  throughout the paper that  $G$ stands for the
complex algebraic group $Sp(4)$.
The conjugacy class  of our interest is a closed affine variety
in  $G$, and its polynomial ring is a quotient
of the polynomial ring $\C[G]$ by a certain ideal.
Our goal is to obtain an analogous description of the quantum conjugacy class.
To that end, we need to describe the quantum analog of the algebra $\C[G]$ first.

Recall from \cite{J,B} that the image of the universal R-matrix of the quantum group $U_\hbar(\g)$ in the
vector representation is equal, up to
a scalar factor, to
$$
R=\sum_{i,j=1 }^4 q^{\delta_{ij}-\delta_{ij'}}e_{ii}\tp e_{jj}
  +
  (q-q^{-1})\sum_{i,j=1 \atop i>j}^4(e_{ij}\tp e_{ji}
- q^{\rho_i-\rho_j}\epsilon_i\epsilon_j
e_{ij}\tp e_{i'j'}),
$$
where $(\rho_1,\rho_2,\rho_3,\rho_4)=(2,1,-1,-2)$.

Denote by $S$ the $U_\hbar(\g)$-invariant operator $PR\in \End(\C^4)\tp \End(\C^4)$,
where $P$ is the ordinary flip of $\C^4\tp \C^4$.
This operator has three invariant projectors to its eigenspaces,
among which there is a one-dimensional projector $\sim
\sum_{i,j=1}^4q^{\rho_i-\rho_j}\epsilon_i\epsilon_j e_{i'j}\tp e_{ij'}
$
to the
 trivial  $U_\hbar(\g)$-submodule, call it $\kappa$.

Denote by  $\C_\hbar[G]$ the associative algebra generated by
the entries of the matrix $K=||k_{ij}||_{i,j=1}^4\in \End(\C^4)\tp \C_\hbar[G]$
modulo the relations
\be
S_{12}K_2S_{12}K_2=K_2S_{12}K_2S_{12}
,\quad K_2S_{12}K_2\kappa=- q^{-5}\kappa=\kappa K_2S_{12}K_2.
\label{ideal_quant_linear}
\ee
These relations are understood in $\End(\C^4)\tp \End(\C^4)\tp \C_\hbar[G]$,
and the indices distinguish the two copies of $\End(\C^4)$, as usual.

The algebra $\C_\hbar[G]$ is an equivariant quantization of $\C[G]$, \cite{RS,FRT}, which is different
from the $RTT$-quantization and is not a Hopf algebra. It carries a $U_\hbar(\g)$-action, which is a deformation of the conjugation $U(\g)$-action
 on $\C[G]$. It admits a  $U_\hbar(\g)$-equivariant algebra
monomorphism to  $U_\hbar(\g)$, where the latter is regarded as the adjoint module.
The monomorphism is implemented by the assignment
$$
K\mapsto (\phi\tp \id)(\Ru_{21}\Ru)=\Q\in \End(\C^4)\tp  U_\hbar(\g),
$$
where $\phi\colon   U_\hbar(\g)\to \End(\C^4)$ is the vector representation and $\Ru$ is the
universal R-matrix of $U_\hbar(\g)$.
The matrix $\Q$ is important for our presentation, and the reader is referred
to \cite{M2} for detailed explanation of its role in quantization and
for  its basic characteristics.

\section{The generalized Verma module $M_\la$}
Denote by $\l$ the Levi subalgebra in $\g=\s\p(4)$ spanned by
$
e_\bt,f_\bt, h_\bt, h_\al.
$
It is a Lie subalgebra of maximal rank, and its semisimple part is
isomorphic to $\s\l(2)\simeq \s\p(2)$. The universal enveloping algebra
$U(\l)$ is quantized as a Hopf subalgebra in $U_\hbar(\g)$.
Denote by $\n^+$ and $\n^-$ the nilpotent subalgebras in $\g$ spanned,
respectively, by $\{e_\al, e_\gm, e_\dt\}$ and  $\{f_\al,f_\gm, f_\dt\}$.
The sum $\l+\n^\pm$ is a parabolic subalgebra  $\p^\pm\subset \g$ whose universal
enveloping algebra
is quantized to a Hopf subalgebra in $U_\hbar(\p^\pm)\subset U_\hbar(\g)$.

Let $U_\hbar(\n^\pm)$ be the subalgebras  in $U_\hbar(\g)$ generated by
the quantum root vectors $\{e_\al, e_\gm, e_\dt\}$ and  $\{f_\al,f_\gm, f_\dt\}$, respectively. The quantum group $U_\hbar(\g)$
is a free $U_\hbar(\n^-)-U_\hbar(\n^+)$-bimodule  generated by
$U_\hbar(\l)$:
\be
\label{triangular_fact}
U_\hbar(\p^-)=U_\hbar(\n^-)U_\hbar(\l),
\quad
U_\hbar(\g)=U_\hbar(\n^-)U_\hbar(\l)U_\hbar(\n^+),
\quad
U_\hbar(\p^+)=U_\hbar(\l)U_\hbar(\n^+).
\ee
The factorizations of $U_\hbar(\p^\pm)$ have the structure of smash product.

Fix  a weight $\la\in \h^*$ orthogonal to $\bt$. It can be regarded as a
one-dimensional representation of $U_\hbar(\l)$,
$$
\la\colon e_\bt, f_\bt, h_\bt\mapsto  0, \quad \la \colon h_\al\mapsto (\al,\la),
$$
which can be extended to a representation of $U_\hbar(\p^+)$ by
$\la\colon e_\al\mapsto  0$. Let $\C_\la$ denote the one-dimensional vector space supporting this representation.

Consider the scalar parabolic Verma module $\hat M_\la$ induced from $\C_\la$,
$$
\hat M_\la= U_\hbar(\g)\tp_{U_\hbar(\p^+)}\C_\la.
$$
As a module over $U_\hbar(\n^-)$, it is freely generated by its highest weight
vector $v_\la$. As a module over the Cartan subalgebra, it isomorphic
to $U_\hbar(\n^-)\tp \C_\la$, where $U_\hbar(\n^-)$ is the natural module over
$U_\hbar(\h)$.

The $U_\hbar(\g)$-module $\hat M_\la$ is irreducible except for special values of
$\la$, when $\hat M_\la$ may contain singular vectors. Recall that a weight vector is called
 singular if it is annihilated by the positive Chevalley generators. Such vectors generate
submodules in $\hat M_\la$, where they carry the highest weight.
 We are looking for such $\la$ that $\hat M_\la$ admits a singular
vector of weight $\la-\dt$. Quotienting out the corresponding submodule yields a module that supports
quantization of $\C[\mathbb{S}^4]$.

\begin{propn}
The module  $\hat M_\la$ admits a singular vector of  weight $\la-\dt$
if and only if
$
q^{2(\al,\la)}=-q^{-2}.
$ Then $f_\dt v_\la$ is the singular vector.
\end{propn}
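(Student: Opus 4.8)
The plan is to locate the singular vector inside an explicit two-dimensional weight space of $\hat M_\la$ and to reduce the question to a pair of linear annihilation conditions. By the PBW basis of $U_\hbar(\n^-)$ (Corollary \ref{PBW}), counting the solutions of $a\al+c\gm+d\dt=\dt$ in non-negative integers shows that the weight subspace $\hat M_\la[\la-\dt]$ is spanned by the two vectors $f_\dt v_\la$ and $f_\al f_\gm v_\la$. The same bookkeeping shows that $e_\al$ maps this subspace into the one-dimensional space $\C f_\gm v_\la$ of weight $\la-\gm$, while $e_\bt$ maps it into the one-dimensional space $\C f_\al^2 v_\la$ of weight $\la-2\al$. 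A weight vector is singular exactly when it is killed by the two positive Chevalley generators $e_\al$ and $e_\bt$ (annihilation by $e_\gm,e_\dt$ being then automatic), so the problem becomes a $2\times 2$ linear system whose four coefficients I must determine.

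First I would record the facts from parabolic induction: $e_\al v_\la=e_\bt v_\la=0$, $f_\bt v_\la=0$, $h_\bt v_\la=0$ and $h_\al v_\la=(\al,\la)v_\la$; in particular $f_\bt f_\al v_\la=f_\gm v_\la$ by the definition of $f_\gm$. Next I would dispose of $e_\bt$. Using $[e_\bt,f_\al]=0$, the definition $f_\gm=f_\bt f_\al-q^{-2}f_\al f_\bt$, and $[e_\bt,f_\bt]=\frac{q^{h_\bt}-q^{-h_\bt}}{q^2-q^{-2}}$, a short computation gives $e_\bt f_\gm v_\la=f_\al v_\la$, hence $e_\bt f_\al f_\gm v_\la=f_\al^2 v_\la$, whereas the two contributions to $e_\bt f_\dt v_\la$ cancel, so that $e_\bt f_\dt v_\la=0$. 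Since $f_\al^2 v_\la\neq 0$, the $e_\bt$-condition forces any singular vector in $\hat M_\la[\la-\dt]$ to be a scalar multiple of $f_\dt v_\la$, which already pins down the candidate.

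It remains to evaluate $e_\al$ on $f_\dt v_\la$, and this is the one genuine calculation. I would push $e_\al$ through $f_\dt=f_\gm f_\al-q^2 f_\al f_\gm$ using $[e_\al,f_\al]=\frac{q^{h_\al}-q^{-h_\al}}{q-q^{-1}}$ together with the derived identity $[e_\al,f_\gm]=f_\bt[e_\al,f_\al]-q^{-2}[e_\al,f_\al]f_\bt$ (obtained from $[e_\al,f_\bt]=0$). Evaluating on $v_\la$ and repeatedly using $f_\bt v_\la=0$ and $f_\bt f_\al v_\la=f_\gm v_\la$, every term collapses to a multiple of $f_\gm v_\la$, the scalars being values of $\frac{q^{m}-q^{-m}}{q-q^{-1}}$ at the $h_\al$-weights $(\al,\la)$ and $(\al,\la)-2$. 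The main obstacle is keeping track of these $q$-powers correctly; after collecting terms, the telescoping identity $\frac{1-q^2}{q-q^{-1}}=-q$ produces the clean answer
\[
e_\al f_\dt v_\la=-\bigl(q^{(\al,\la)+1}+q^{-(\al,\la)-1}\bigr)\,f_\gm v_\la.
\]
As $f_\gm v_\la\neq 0$, this vanishes if and only if $q^{(\al,\la)+1}+q^{-(\al,\la)-1}=0$, that is $q^{2(\al,\la)+2}=-1$, equivalently $q^{2(\al,\la)}=-q^{-2}$. Combined with the previous paragraph, $\hat M_\la[\la-\dt]$ contains a singular vector, necessarily proportional to $f_\dt v_\la$, exactly under this condition, which establishes both implications.
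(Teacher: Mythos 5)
Your proof is correct, and it takes essentially the same approach as the paper: both arguments reduce the question to imposing $e_\al$- and $e_\bt$-annihilation on the two-dimensional weight space $\hat M_\la[\la-\dt]$ and extract the condition $q^{2(\al,\la)}=-q^{-2}$ from the resulting linear constraints. The only difference is organizational: you work in the PBW basis $\{f_\dt v_\la,\, f_\al f_\gm v_\la\}$, so the $e_\bt$-condition immediately isolates $f_\dt v_\la$ as the unique candidate and a single computation of $e_\al f_\dt v_\la$ finishes, whereas the paper parametrizes the same space by monomials in $f_\al, f_\bt$ with unknown coefficients $a,b$, solves the simultaneous system, and only at the end recognizes the solution as $f_\dt v_\la$.
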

\begin{proof}
The general expression for the  vector of
weight $\la-\dt$ in $M_\la$ is
$$
\bigl(f_{\al}^2f_{\bt}-(a+b)f_{\al}f_{\bt}f_{\al}+abf_{\bt}f_{\al}^2\bigr)v_\la=
\bigl(-(a+b)f_{\al}f_{\bt}f_{\al}+abf_{\bt}f_{\al}^2\bigr)v_\la,
$$
where $a,b$ are some scalars. For this  vector being singular, we have a system
of two equations on $a,b$ resulted from the action of $e_\bt$ and $e_\al$:
$$
\left\{
\begin{array}{rrr}
\bigl(-(a+b)f_{\al}[e_{\bt},f_{\bt}]f_{\al}+ab[e_{\bt},f_{\bt}]f_{\al}^2\bigr)v_\la&=&0,
\\
\bigl(-(a+b)[e_{\al},f_{\al}]f_{\bt}f_{\al}
+abf_{\bt}[e_{\al},f_{\al}]f_{\al}
+abf_{\bt}f_{\al}[e_{\al},f_{\al}]\bigr)v_\la&=&0.
\end{array}
\right.
$$
The non-zero solution of this system is unique (up to permutation  $a \leftrightarrow b$)
and equal to
$$
q^{2(\al,\la)}=-q^{-2}
,\quad a=q^2,\quad  b=q^{-2},
$$
as required.
Finally, notice that
$
f_{\dt}=f_{\al}^2f_{\bt}-(q^2+q^{-2}) f_{\al}f_{\bt}f_{\al}+f_{\bt}f_{\al}^2.
$
This completes the proof.
\end{proof}
Denote by $M_\la$  the quotient of $\hat M_\la$
by the submodule $U_\hbar(\g)f_{\dt}v_\la$.
By Corollary \ref{PBW}, the vectors $f_\al^k f_\gm^l f_\dt^mv_\la$ for all non-negative integer $k,l,m$
form a basis in $\hat M_\la$.
 Therefore, $M_\la$
is spanned by $f_{\al}^kf_{\gm}^l v_\la$, $k,l\geqslant 0$.
\begin{propn}
The module $M_\la$ is irreducible.
\end{propn}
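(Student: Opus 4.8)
The plan is to exploit the fact that $M_\la$ is multiplicity free. By Corollary~\ref{PBW} the monomials $f_\al^k f_\gm^l v_\la$, $k,l\geqslant 0$, form a basis of $M_\la$, and $f_\al^k f_\gm^l v_\la$ has weight $\la-(k+l)\al-l\bt$. Since $(k,l)\mapsto(k+l,l)$ is injective, every weight subspace of $M_\la$ is at most one-dimensional. A highest weight module is irreducible exactly when it carries no singular vector other than a multiple of its highest weight vector: a proper nonzero submodule is graded by weights bounded above by $\la$, hence contains a vector of maximal weight $\mu$ annihilated by $e_\al$ and $e_\bt$, and necessarily $\mu<\la$ because the $\la$-weight space is spanned by the generator $v_\la$. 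By multiplicity freeness such a singular vector is proportional to a single basis monomial, so it suffices to show that no $f_\al^k f_\gm^l v_\la$ with $(k,l)\neq(0,0)$ is simultaneously killed by $e_\al$ and $e_\bt$.

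First I would treat $e_\bt$. Since $[e_\bt,f_\al]=0$, one has $e_\bt f_\al^k f_\gm^l v_\la=f_\al^k\,e_\bt f_\gm^l v_\la$, reducing everything to $e_\bt f_\gm^l v_\la$. Using $[e_\bt,f_\gm]=t_\bt f_\al-q^{-2}f_\al t_\bt$ with $t_\bt=\frac{q^{h_\bt}-q^{-h_\bt}}{q^2-q^{-2}}$, together with $h_\bt v_\la=0$ and $f_\bt v_\la=0$, the relation $[f_\gm,f_\dt]=0$ of Lemma~\ref{Borel}, and the vanishing $f_\dt v_\la=0$ in $M_\la$, a short induction on $l$ should give
\[
e_\bt f_\gm^l v_\la=[l]_{q^2}\,f_\al f_\gm^{l-1}v_\la ,\qquad [l]_{q^2}=\frac{q^{2l}-q^{-2l}}{q^2-q^{-2}} .
\]
As $q=e^\hbar$ is a formal exponential, $[l]_{q^2}\neq 0$ for every $l\geqslant 1$, whence $e_\bt f_\al^k f_\gm^l v_\la\neq 0$ whenever $l\geqslant 1$; a singular vector is therefore forced to have $l=0$.

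It then remains to examine the vectors $f_\al^k v_\la$, for which $e_\bt f_\al^k v_\la=0$, so singularity is decided by $e_\al$. The computation is the familiar one inside the $U_\hbar\bigl(\s\l(2)\bigr)$ generated by $e_\al,f_\al,h_\al$:
\[
e_\al f_\al^k v_\la=[k]_q\,[(\al,\la)-k+1]_q\, f_\al^{k-1}v_\la ,\qquad [m]_q=\frac{q^m-q^{-m}}{q-q^{-1}} ,
\]
where $(\al,\la)=\la(h_\al)$. The factor $[(\al,\la)-k+1]_q$ vanishes only if $q^{2(\al,\la)}=q^{2(k-1)}$, i.e. $q^{2k}=-1$ after substituting the defining value $q^{2(\al,\la)}=-q^{-2}$ of the preceding proposition; this is impossible for a formal $q$, and $[k]_q\neq 0$ for $k\geqslant 1$ for the same reason. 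Hence $e_\al f_\al^k v_\la\neq 0$ for all $k\geqslant 1$, so the only singular vector with $l=0$ is $v_\la$ itself.

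Combining the two cases, $v_\la$ is, up to scalars, the unique singular vector of $M_\la$, and since $v_\la$ generates $M_\la$ the module is irreducible. I expect the main obstacle to be the inductive evaluation of $e_\bt f_\gm^l v_\la$: one must carry the $\s\l(2)_\bt$-weight bookkeeping for $t_\bt$ through each step and, crucially, use $[f_\gm,f_\dt]=0$ together with $f_\dt v_\la=0$ to discard every $f_\dt$-contribution, which is precisely what keeps the result inside the one-dimensional weight spaces of $M_\la$ and yields the clean coefficient $[l]_{q^2}$.
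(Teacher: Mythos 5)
Your proof is correct and takes essentially the same route as the paper's: the paper's own argument consists precisely of your two formulas for the action of $e_\al$ and $e_\bt$ on the monomials $f_\al^k f_\gm^l v_\la$ (your coefficient matches, since $[k]_q\,[(\al,\la)-k+1]_q=q^{(\al,\la)+1}\frac{q^{2k}-q^{-2k}}{(q-q^{-1})^2}$ once $q^{2(\al,\la)}=-q^{-2}$ is used), from which it concludes the absence of singular vectors with the derivations omitted. You have simply filled in those omitted details (the $f_\dt$-cancellation via Lemma~\ref{Borel} and the multiplicity-one reduction); the paper also offers an alternative argument via non-degeneracy of the invariant pairing of Section~\ref{Shapovalov}, which you do not need.
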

\begin{proof}
Irreducibility follows from non-degeneracy of the invariant bilinear pairing
of $M_\la$ with its dual, see Section \ref{Shapovalov}. One can also verify
that $M_\la$ has no singular vector. Omitting the details,
the action of the positive Chevalley generators on $M_\la$ is given by
\be
e_\al f_{\al}^k f_{\gm}^m v_\la&=&
q^{(\al,\la)+1}\frac{q^{2k}-q^{-2k}}{(q-q^{-1})^2} f_{\al}^{k-1} f_{\gm}^m v_\la,
\nn
\\
e_\bt f_{\al}^kf_{\gm}^m v_\la&=& \frac{q^{2m}-q^{-2m}}{q^{2}-q^{-2}}f_{\al}^{k+1} f_{\gm}^{m-1} v_\la.
\nn
\ee
Here we assume that $k>0$ in the first line and $m>0$ in the second; otherwise the right hand side is nil.
This immediately implies the absence of singular vectors in $M_\la$.
\end{proof}

\section{The $U_\hbar(\g)$-module $\C^4\tp M_\la$}
The tautological assignment (\ref{C4rep}) defines the four-dimensional irreducible representation of $U(\g)$.
Similar assignment on the quantum Chevalley generators and Cartan elements defines a representation of $U_\hbar(\g)$.
Our next object of interest is the $U_\hbar(\g)$-module $\C^4\tp M_\la$.
In particular, we shall study the decomposition of $\C^4\tp M_\la$ into direct sum of irreducible submodules.

Choose the standard basis $\{w_i\}_{i=1}^4\subset \C^4$ of columns with the only nonzero entry
$1$ in the $i$-TtH place from the top. Their weights are $\ve_1, \ve_2,-\ve_2,-\ve_1$, respectively.
As a $U_\hbar(\l)$-module, $\C^4$ splits into the sum of two one-dimensional blocks of weights
$
\pm\ve_1
$
and one two-dimensional block of highest weights
$
\ve_2
$.
The parabolic Verma module contains three blocks
of highest weights
$
\ve_1+\la, \ve_2+\la, -\ve_1+\la,
$
which we denote by $\hat V_{\ve_1+\la}$, $\hat V_{\ve_2+\la}$, $\hat V_{-\ve_1+\la}$.
For generic $\la$ these submodules are
 irreducible, and
\be
\C^4\tp \hat M_\la =\hat V_{\ve_1+\la}\oplus \hat V_{\ve_2+\la}\oplus \hat V_{-\ve_1+\la}.
\label{Verma_split}
\ee
All these blocks are parabolic Verma modules corresponding to the $U_\hbar(\l)$-submodules of $\C^4$.

Clearly $\la+\ve_1$ is the highest weight of $\C^4\tp \hat M_\la$ and $w_1\tp v_\la$ is the
highest weight vector.
The other singular vectors in $\C^4\tp \hat M_\la$ are given next.
\begin{lemma}
The  vectors
\be
u_{\ve_1}&=& w_1\tp v_\la,\nn\\
u_{\ve_2}&=&w_1\tp f_\al v_\la-q\frac{q^{(\al,\la)}-q^{-(\al,\la)}}{q-q^{-1}} w_2\tp v_\la,\nn\nn\\
u_{-\ve_1}&=&  f_\dt w_1\tp  v_\la  +
(q^{(\la,\al)+1}+q^{-(\la,\al)-1})\times\nn\\
&\times&
\Bigl(q w_2\tp f_\bt f_\al v_\la-q^3 w_3\tp f_\al v_\la-q^4\frac{q^{(\la,\al)}-q^{-(\la,\al)}}{q-q^{-1}}w_4\tp v_\la\Bigr)
\nn
\ee
are  singular and generate the submodules
$\hat V_{\ve_1+\la}$, $\hat V_{\ve_2+\la}$, $\hat V_{-\ve_1+\la}$, respectively.
\end{lemma}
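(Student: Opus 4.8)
The plan is to verify directly that each of the three listed vectors is annihilated by the positive Chevalley generators $e_\al$ and $e_\bt$, and then to identify the submodule each one generates by comparing weights with the decomposition (\ref{Verma_split}). The case $u_{\ve_1}=w_1\tp v_\la$ is immediate: $w_1$ has weight $\ve_1$ and is the top weight vector of $\C^4$, so $e_\al w_1=e_\bt w_1=0$, while $v_\la$ is the highest weight vector of $\hat M_\la$; using the coproduct $\Delta(e_\mu)=e_\mu\tp 1+q^{h_\mu}\tp e_\mu$ both terms vanish. Since $\ve_1+\la$ is the top weight of $\C^4\tp\hat M_\la$, this vector generates $\hat V_{\ve_1+\la}$.

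For $u_{\ve_2}$ and $u_{-\ve_1}$ the verification is a finite computation in the PBW basis. First I would compute $e_\al$ and $e_\bt$ applied to each summand, pushing the generators to the right through the $f$'s using the commutator relations of Section \ref{QG} (in particular $[e_\al,f_\al]=(q^{h_\al}-q^{-h_\al})/(q-q^{-1})$, $[e_\bt,f_\bt]=(q^{h_\bt}-q^{-h_\bt})/(q^2-q^{-2})$, and $[e_\al,f_\bt]=0=[e_\bt,f_\al]$) together with the action of $e_\al,e_\bt$ on the basis vectors $w_i$ of $\C^4$ read off from (\ref{C4rep}). The Cartan elements act by the known weights, and here one uses the singular-vector condition $q^{2(\al,\la)}=-q^{-2}$ from the preceding proposition wherever a factor $q^{h_\al}$ meets $v_\la$. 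The coefficients in $u_{\ve_2}$ and $u_{-\ve_1}$ are precisely those that force the surviving terms to cancel in pairs; I would organize the calculation by weight so that only finitely many monomials $w_i\tp f_\al^a f_\bt^b\cdots v_\la$ can appear on the right, reducing each annihilation claim to a small linear system in $q$ which the stated coefficients solve.

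Once the three vectors are shown to be singular, identifying the submodules is a matter of bookkeeping. Their weights $\ve_1+\la$, $\ve_2+\la$, $-\ve_1+\la$ are exactly the three highest weights appearing in (\ref{Verma_split}), and for generic $\la$ these blocks are irreducible parabolic Verma modules; a singular vector of a given weight therefore generates the unique block with that highest weight. One should also check that $u_{\ve_2}$ and $u_{-\ve_1}$ are genuine highest weight vectors for $U_\hbar(\l)$, i.e. that they are also annihilated by $e_\bt$ in the sense required to start the parabolic induction, which is part of the $e_\bt$ computation above.

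I expect the main obstacle to be $u_{-\ve_1}$, whose leading term $f_\dt w_1\tp v_\la$ involves the non-standard root vector $f_\dt=f_\al^2 f_\bt-(q^2+q^{-2})f_\al f_\bt f_\al+f_\bt f_\al^2$. Acting by $e_\al$ and $e_\bt$ on this term produces many monomials, and the cancellation against the three correction terms is delicate; the factor $q^{(\la,\al)+1}+q^{-(\la,\al)-1}$ and the individual powers of $q$ must be tracked with care, and repeated use of the relation $q^{2(\al,\la)}=-q^{-2}$ is essential to collapse the result. The other computations are routine by comparison.
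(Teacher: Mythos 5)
Your overall strategy coincides with the paper's own proof: the paper also verifies directly that the three vectors are annihilated by $e_\al$ and $e_\bt$, treating $u_{\ve_1}$ as obvious, $u_{\ve_2}$ as easy, and omitting the ``bulky but straightforward'' computation for $u_{-\ve_1}$, and the identification of the generated submodules is by weight bookkeeping, as you describe. However, there is a genuine error in how you handle the weight $\la$. You assert, twice, that the cancellations require the relation $q^{2(\al,\la)}=-q^{-2}$ from the preceding proposition (``wherever a factor $q^{h_\al}$ meets $v_\la$'', and ``essential to collapse the result'' for $u_{-\ve_1}$). This is false: the lemma is a statement about $\C^4\tp\hat M_\la$ for \emph{generic} $\la$, and the $\la$-dependent coefficients $q\frac{q^{(\al,\la)}-q^{-(\al,\la)}}{q-q^{-1}}$ and $q^{(\la,\al)+1}+q^{-(\la,\al)-1}$ are tuned precisely so that the annihilation identities hold identically in $\la$, with no condition imposed. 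For instance, $e_\al(w_1\tp f_\al v_\la)=q^{h_\al}w_1\tp[e_\al,f_\al]v_\la= q\frac{q^{(\al,\la)}-q^{-(\al,\la)}}{q-q^{-1}}\,w_1\tp v_\la$, while $e_\al(w_2\tp v_\la)=w_1\tp v_\la$, so the two contributions to $e_\al u_{\ve_2}$ cancel for every $\la$; similarly $e_\bt u_{\ve_2}=0$ identically. The specialization $q^{2(\al,\la)}=-q^{-2}$ enters only \emph{after} the lemma, when the paper passes to the quotient $\C^4\tp M_\la$.

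This is not a cosmetic point; invoking the special value inside the proof breaks your own argument. First, your identification step appeals to the decomposition (\ref{Verma_split}) into irreducible parabolic Verma blocks, which holds only for generic $\la$; so the annihilation half of your proof (done at the special value) and the identification half (done at generic $\la$) would be carried out at incompatible values of $\la$ and could not be combined. Second, at the special value one has $q^{(\la,\al)+1}+q^{-(\la,\al)-1}=0$, so the whole correction term in $u_{-\ve_1}$ vanishes and the vector degenerates to $w_1\tp f_\dt v_\la$; the submodule this degenerate vector generates is contained in $\C^4\tp U_\hbar(\g)f_\dt v_\la$ and is \emph{not} the Verma block $\hat V_{-\ve_1+\la}$, so the third assertion of the lemma is simply not obtainable at the special value. (This degeneration is not a defect of the lemma --- it is its very purpose: it is exactly why $\hat V_{-\ve_1+\la}$ is killed by the projection to $\C^4\tp M_\la$ in the paragraph that follows.) The fix is to carry out your computation keeping $(\al,\la)$ arbitrary; if the terms then fail to cancel without imposing $q^{2(\al,\la)}=-q^{-2}$, that signals an arithmetic slip, not a needed hypothesis.
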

\begin{proof}
One should check that $u_{\ve_1}$, $u_{\ve_2}$, $u_{-\ve_1}$ are annihilated by $e_\al$ and $e_\bt$. That is obvious for $u_{\ve_1}$ and relatively easy for $u_{\ve_2}$. The case of $u_{-\ve_1}$ requires
bulky but straightforward calculation, which is omitted here.
\end{proof}
We denote by $V_{\ve_1+\la}$, $V_{\ve_2+\la}$, $V_{-\ve_1+\la}$ the images of
$\hat V_{\ve_1+\la}$, $\hat V_{\ve_2+\la}$, $\hat V_{-\ve_1+\la}$ under the projection
$\C^4\tp \hat M_\la\to \C^4\tp M_\la$, assuming $q^{2(\al,\la)}=-q^{-2}$.
An important fact is that for  $q^{2(\al,\la)}=-q^{-2}$ the
singular vector $u_{-\ve_1}$ turns into $w_1\tp f_\dt v_\la$ and thus
disappears from $\C^4\tp M_\la$. The submodule  $\hat V_{-\ve_1+\la}$ is killed
by the projection $\C^4\tp \hat M_\la\to \C^4\tp M_\la$, so $V_{-\ve_1+\la}=\{0\}$.
\begin{propn}
\label{dir_sum}
The module
$\C^4\tp M_\la$ is a direct sum of the submodules $V_{\ve_1+\la}$ and $V_{\ve_2+\la}$.
\end{propn}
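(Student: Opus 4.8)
The plan is to obtain the decomposition by transporting the generic splitting (\ref{Verma_split}) through the quotient map and then to establish directness from the irreducibility of one summand. Write $p\colon \hat M_\la\to M_\la=\hat M_\la/U_\hbar(\g)f_\dt v_\la$ for the canonical projection and $\pi=\id\tp\, p\colon \C^4\tp\hat M_\la\to \C^4\tp M_\la$ for the induced surjection. Since it was already noted that at $q^{2(\al,\la)}=-q^{-2}$ the block $\hat V_{-\ve_1+\la}$ is killed by $\pi$, applying $\pi$ to (\ref{Verma_split}) and using surjectivity gives
\be
\C^4\tp M_\la=\pi(\hat V_{\ve_1+\la})+\pi(\hat V_{\ve_2+\la})+\pi(\hat V_{-\ve_1+\la})=V_{\ve_1+\la}+V_{\ve_2+\la}.
\nn
\ee
Thus the two submodules span the tensor product, and only directness remains.

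For directness I would prove that $V_{\ve_1+\la}$ is irreducible. It is cyclically generated by the singular vector $u_{\ve_1}=w_1\tp v_\la$ of weight $\ve_1+\la$, so it is irreducible exactly when it carries no further singular vector of strictly lower weight. Granting this, the intersection $V_{\ve_1+\la}\cap V_{\ve_2+\la}$ is a submodule of the irreducible module $V_{\ve_1+\la}$ and hence equals $\{0\}$ or $V_{\ve_1+\la}$; the latter would force $V_{\ve_1+\la}\subseteq V_{\ve_2+\la}$, which is impossible because $V_{\ve_1+\la}$ has a vector of weight $\ve_1+\la$ while every weight of $V_{\ve_2+\la}$ is $\preceq\ve_2+\la\prec\ve_1+\la$. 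Therefore $V_{\ve_1+\la}\cap V_{\ve_2+\la}=\{0\}$ and the sum is direct.

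I expect the irreducibility of $V_{\ve_1+\la}$ to be the only real obstacle, since a priori the quotient by $U_\hbar(\g)f_\dt v_\la$ could create a new singular vector; this is precisely where the special value $q^{2(\al,\la)}=-q^{-2}$ must enter. To carry it out I would transport the explicit action of $e_\al,e_\bt$ on $M_\la$ (recorded in the proof that $M_\la$ is irreducible) to the basis $w_i\tp f_\al^k f_\gm^l v_\la$ of $\C^4\tp M_\la$ by means of the coproduct, and verify that the system $e_\al u=0=e_\bt u$ has no solution of weight below $\ve_1+\la$ inside $V_{\ve_1+\la}$ — a bulky but routine computation parallel to the one already done for $M_\la$. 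Alternatively, one may invoke the nondegeneracy of the contravariant form on $\C^4\tp M_\la$ furnished in Section \ref{Shapovalov}: checking that $V_{\ve_1+\la}$ and $V_{\ve_2+\la}$ are mutually orthogonal, any $v$ in their intersection is orthogonal to $V_{\ve_1+\la}+V_{\ve_2+\la}=\C^4\tp M_\la$ and hence vanishes, giving the same conclusion without a separate irreducibility statement.
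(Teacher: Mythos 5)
Your spanning step contains the decisive gap. The decomposition (\ref{Verma_split}) is stated, and is only valid, for \emph{generic} $\la$, whereas Proposition \ref{dir_sum} concerns precisely the non-generic value $q^{2(\al,\la)}=-q^{-2}$, at which $\hat M_\la$ becomes reducible. At this value nothing guarantees that the three submodules generated by $u_{\ve_1}$, $u_{\ve_2}$, $u_{-\ve_1}$ still sum to $\C^4\tp \hat M_\la$: indeed $u_{-\ve_1}$ degenerates to $w_1\tp f_\dt v_\la$, so $\hat V_{-\ve_1+\la}\subseteq \ker\pi=\C^4\tp U_\hbar(\g)f_\dt v_\la$, and the identity your first display requires, namely $\hat V_{\ve_1+\la}+\hat V_{\ve_2+\la}+\ker\pi=\C^4\tp\hat M_\la$, is precisely equivalent (by surjectivity of $\pi$) to the spanning assertion you are trying to prove. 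Transporting (\ref{Verma_split}) through $\pi$ therefore begs the question. This is exactly the hole the paper fills by hand: it verifies that $\C^4\tp v_\la$ lies in $V=V_{\ve_1+\la}+V_{\ve_2+\la}$ by successively applying $f_\al$, $f_\bt$ to $w_1\tp v_\la$ and using the quotient relation $f_\dt v_\la=0$ in $M_\la$ (this is where the special value actually enters the argument), and then extends to all of $\C^4\tp M_\la$ via the cyclicity identity $w\tp uv_\la=\Delta(u^{(2)})\bigl(\gm^{-1}(u^{(1)})w\tp v_\la\bigr)$. Any correct proof must contain a computation or a character/filtration argument of this kind at the special value; your proposal omits it.

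The directness half is also left open. It hinges on irreducibility of $V_{\ve_1+\la}$, which you defer as a ``bulky but routine'' verification and which is established nowhere in the paper; without it the weight argument collapses, because a nonzero proper submodule of a reducible highest weight module lies strictly below the top weight, so the intersection need not contain a vector of weight $\ve_1+\la$. Your fallback via a nondegenerate contravariant form on $\C^4\tp M_\la$ appeals to Section \ref{Shapovalov}, but that section only constructs the pairing between $M^-_{-\la}$ and $M_\la$, not a form on the tensor product, so that machinery would also have to be built. The paper disposes of directness in one line: the invariant matrix $\Q$ acts on $V_{\ve_1+\la}$ and on $V_{\ve_2+\la}$ by the distinct scalars $-q^{-2}$ and $q^{-2}$, so any $v$ in the intersection satisfies $2q^{-2}v=0$, hence $v=0$. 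You should adopt that argument: it needs neither irreducibility nor a bilinear form.
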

\begin{proof}
The modules $V_{\ve_1+\la}$ and $V_{\ve_2+\la}$ have zero intersection, as they carry different eigenvalues of
the invariant matrix $\Q$, see below.
We must show that the sum $V_{\ve_1+\la}\oplus V_{\ve_2+\la}$ exhausts all of
 $\C^4\tp M_\la$. To that end,
it is sufficient to show that $\C^4\tp v_\la$ lies in  $V=V_{\ve_1+\la}\oplus V_{\ve_2+\la}$.
Indeed, then for all $u\in  U_\hbar(\g)$ and $w\in \C^4$,
$$
w\tp uv_\la=\Delta(u^{(2)})\bigl(\gm^{-1}(u^{(1)})w\tp v\bigr)\in V,
$$
as required.

In what follows $\equiv $ will mean equality modulo $V$.
Obviously, $w_1\tp v_\la\equiv 0$.
Applying $f_{\al}$ to $w_1\tp v_\la$ gives $w_1\tp f_{\al}v_\la+ q^{-(\al,\la)}w_2\tp v_\la\equiv 0$.
Comparing this with $u_{\ve_2+\la}\in V$ we conclude that $w_2\tp v_\la\equiv 0$.
Applying $f_{\bt}$ to $w_2\tp v_\la$ gives $w_3\tp v_\la \equiv 0 $.

Thus, we are left to check that $w_4\tp v\in V$. We have
$$
0\equiv f_{\al}(w_1\tp v_\la)\equiv w_1 \tp  f_\al v_\la ,
\quad
0\equiv f_{\al}( w_2\tp v_\la)= w_2 \tp f_\al v_\la,
$$
$$
0\equiv f_{\al}^2(w_1\tp v_\la)\equiv f_\al(w_1\tp f_\al v_\la)
=w_1\tp f_\al^2 v_\la+q^{-2-(\al,\la)}w_2\tp f_\al v_\la \equiv w_1\tp f_\al^2 v_\la,
$$
\be
0\equiv f_\bt f_{\al}^2(w_1\tp v_\la)\equiv f_\bt(w_1\tp f_\al^2 v_\la)= w_1\tp f_\bt f_\al^2 v_\la.
\label{aux1}
\ee
Further,
$$
0\equiv f_\bt f_{\al}(w_1\tp v_\la)\equiv f_\bt( w_1\tp f_\al v_\la)=
 w_1\tp f_\bt f_\al v_\la,
$$
\be
0\equiv f_\al f_\bt f_{\al}(w_1\tp v_\la)\equiv
f_\al (w_1\tp f_\bt f_\al v_\la)=
 w_1\tp f_\al f_\bt f_\al v_\la
+q^{-(\al,\la)}w_2\tp f_\bt f_\al v_\la.
\label{aux2}
\ee
Combining (\ref{aux1}) and (\ref{aux2}),  we calculate $f_\dt (w_1\tp v_\la)\in V$:
$$
0\equiv
\bigl(-(q^2+q^{-2}) f_{\al}f_{\bt}f_{\al}+f_{\bt}f_{\al}^2\bigr)(w_1\tp v_\la)
\equiv w_1\tp f_\dt v_\la-(q^2+q^{-2})q^{-(\al,\la)}w_2\tp f_\bt f_\al v_\la.
$$
The first equality takes place because $f_\bt(w_1\tp v_\la)=0$.
Therefore $w_2\tp f_\bt f_\al v_\la\equiv 0$, and
$$
0\equiv f_{\bt}(w_2\tp f_\al v_\la)=
w_2\tp f_{\bt}f_\al v_\la+q^2w_3\tp f_\al v_\la\equiv q^2 w_3\tp f_\al v_\la.
$$
Finally,
$$
0\equiv f_\al (w_3\tp v_\la)=w_3\tp f_\al v_\la
-%+
q^{-(\al,\la)} w_4\tp v_\la\equiv
-%+
q^{-(\al,\la)}w_4\tp  v_\la,
$$
as required.
\end{proof}

Now consider the action of the matrix $\Q$ on $\C^4\tp \hat M_\la$.
It satisfies a cubic polynomial equation, and its eigenvalues in $\C^4\tp \hat M_\la$
can be found in  \cite{M2}:
$$
q^{2(\la,\ve_1)}=-q^{-2},
$$
$$
q^{2(\la+\rho,\ve_2)-2(\rho,\nu)}=q^{2(\rho,\ve_2)-2(\rho,\nu)}=q^{-2},
$$
$$
q^{2(\la+\rho,-\ve_1)-2(\rho,\nu)}=q^{-2(\la,\ve_1)-4(\rho,\nu)}=-q^{-6}.
$$
The operator $\Q$ is semisimple on $\C^4\tp \hat M_\la$ for generic $\la$.
Due to Proposition \ref{dir_sum}, it is semisimple
on $\C^4\tp M_\la$ and has eigenvalues $\pm q^{-2}$.

\section{Quantization of  $\mathbb{S}^4$}
Let $\phi$ denote  the representation homomorphism $U_\hbar(\g)\to \End(\C^4)$.
The $q$-trace of $\Q$ is a weighted trace $\Tr_q(\Q)=\Tr(D\Q)$,
where $D$ is the diagonal matrix $\diag(q^4,q^2,q^{-2},q^{-4})$. It belongs
to the center of $U_\hbar(\g)$ and hence the center of
$\C_\hbar[G]\subset  U_\hbar(\g)$.
A module of highest weight $\la$ defines a central character $\chi_\la$ of the
algebra $\C_\hbar[G]$, which returns zero on $\Tr_q(\Q)$:
\be
\chi_\la\bigl(\Tr_q(\Q)\bigr)&=&
\Tr\bigl(\phi(q^{h_\la+h_\rho})\bigr)=q^{2(\la+\rho,\ve_1)}
+q^{2(\la+\rho,\ve_2)}
+q^{2(\la+\rho,-\ve_2)}
+q^{2(\la+\rho,-\ve_1)}
\nn\\
&=&q^{2(\la,\ve_1)+4}
+q^{2}
+q^{-2}
+q^{-2(\la,\ve_1)-4}
=
-q^{2}
+q^{2}
+q^{-2}
-q^{-2}
=0,
\nn
\ee
cf. \cite{M2}.
Thus, the $q$-trace of the matrix $\Q$ vanishes in $M_\la$.
Also, the entries of the matrix
$
\Q^2-q^{-4}
$
are annihilated  in
$\End(M_{\la})$, as discussed in the previous section.

\begin{propn}
The image of $\C_\hbar[G]$ in $\End(M_{\la})$ is a quantization of $\C_\hbar[\mathbb{S}^4]$.
It is isomorphic to the subalgebra in $U_\hbar(\g)$ generated by the entries of the matrix $\Q=(\phi\tp \id)(\Ru_{21}\Ru)$, modulo
the relations
\be
\Q^2=q^{-4}, \quad \Tr_q(\Q)=0.
\label{q-rel}
\ee
\end{propn}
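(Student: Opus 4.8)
The plan is to identify the algebra in the statement with the image $\mathcal{B}$ of $\C_\hbar[G]$ in $\End(M_\la)$ and then to pin down its size from above and below. First I would note that the realisation factors through the proposed presentation: by the equivariant monomorphism $K\mapsto\Q$ of Section \ref{S_qG}, the subalgebra $\mathcal{B}\subset\End(M_\la)$ is the image of the subalgebra of $U_\hbar(\g)$ generated by the entries $\Q_{ij}$, and the previous section shows that the entries of $\Q^2-q^{-4}$ and the central element $\Tr_q(\Q)$ act by zero on $M_\la$. Hence both relations of (\ref{q-rel}) hold in $\mathcal{B}$, and there is an algebra surjection $\pi\colon\mathcal{A}_\hbar\twoheadrightarrow\mathcal{B}$, where $\mathcal{A}_\hbar=\C_\hbar[G]/J$ and $J$ is the two-sided ideal generated by the entries of $\Q^2-q^{-4}$ and by $\Tr_q(\Q)$. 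The whole problem reduces to showing that $\pi$ is an isomorphism onto a flat deformation of $\C[\mathbb{S}^4]$.

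Flatness of $\mathcal{B}$ comes for free: $M_\la$ is a free $\C[[\hbar]]$-module, so $\End(M_\la)$ is torsion-free over the discrete valuation ring $\C[[\hbar]]$, and a submodule of a torsion-free module over a principal ideal domain is again torsion-free, hence flat. The classical limit is read off by setting $\hbar=0$: the matrix $\Q$ degenerates to the matrix $A$ of coordinate functions, $q^{-4}\to 1$ and $\Tr_q\to\Tr$, so the defining relations of $\C_\hbar[G]$ collapse to the commutative relations of $\C[G]$ with the Drinfeld--Sklyanin bracket appearing at first order, while $J$ degenerates to the classical ideal $(A^2-1,\,\Tr A)$. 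Thus $\mathcal{B}/\hbar\mathcal{B}$ is a priori a Poisson quotient of $\C[\mathbb{S}^4]$ carrying the bracket (\ref{poisson_br}), with the sphere relation (\ref{sphere}) recovered from $A^2=1$; it remains only to show that no further collapse occurs.

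The remaining work is a size comparison. For the lower bound I would invoke faithfulness of the operator realisation: $M_\la$ is irreducible, so the invariant pairing of Section \ref{Shapovalov} is non-degenerate and the realisation is faithful; consequently, in each $U_\hbar(\g)$-weight component, the rank of $\mathcal{B}$ is at least the dimension of the corresponding component of $\C[\mathbb{S}^4]$, so $\mathcal{B}/\hbar\mathcal{B}$ is all of $\C[\mathbb{S}^4]$ and not a proper quotient. For the upper bound I would exhibit a normal form in $\mathcal{A}_\hbar$: since $\C[\mathbb{S}^4]$ is the hypersurface (\ref{sphere}) in the five generators, the quantum sphere relation $\Q^2=q^{-4}$, together with the $q$-commutation relations inherited from (\ref{ideal_quant_linear}) and the trace relation $\Tr_q(\Q)=0$, should let one rewrite every monomial in the quantum generators as a $\C[[\hbar]]$-combination of ordered monomials whose reductions modulo $\hbar$ form a basis of $\C[\mathbb{S}^4]$, bounding the per-component rank of $\mathcal{A}_\hbar$ from above by the same dimension. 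Since $\pi$ is surjective, the two bounds coincide, $\pi$ is an isomorphism, and $\mathcal{A}_\hbar\cong\mathcal{B}$ is flat with classical limit $\C[\mathbb{S}^4]$ --- which is both assertions of the proposition.

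I expect the upper bound to be the main obstacle: one must verify that the two relations of (\ref{q-rel}) already generate the entire kernel, and that reordering the noncommutative generators against the quantum sphere relation closes up into the claimed normal form without producing spurious lower-order relations. The bookkeeping here is best organised through the known, multiplicity-free decomposition of $\C[\mathbb{S}^4]$ as an $Sp(4)$-module, matched against the $U_\hbar(\g)$-module structure carried by $\mathcal{B}$ on $M_\la$, which pins the size down isotypic component by isotypic component.
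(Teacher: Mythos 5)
Your skeleton---the surjection $\pi\colon\mathcal{A}_\hbar\twoheadrightarrow\mathcal{B}$ onto the image $\mathcal{B}\subset\End(M_\la)$, flatness of $\mathcal{B}$ from torsion-freeness over the DVR $\C[[\hbar]]$, and a two-sided size comparison---is the same outline as the paper's, but both of the pivotal bounds are gaps, and they are exactly the two points the paper settles by other means. The lower bound as you state it is circular: irreducibility of $M_\la$ (equivalently, non-degeneracy of the Shapovalov pairing) says nothing about how large the image of the subalgebra generated by the entries of $\Q$ is inside $\End(M_\la)$. The only reading of ``the realisation is faithful'' that yields your conclusion is injectivity of $\mathcal{A}_\hbar\to\End(M_\la)$, which \emph{is} the assertion $J=\ker\phi$ you are trying to prove; the weaker true statement, that $\mathcal{B}$ acts faithfully on $M_\la$, gives no rank estimate at all. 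The mechanism that actually rules out collapse (spelled out in \cite{M2}, to which the paper defers) is equivariance plus homogeneity: $\ker\phi$ is a $U_\hbar(\g)$-submodule, so its reduction modulo $\hbar$ is a conjugation-invariant ideal of $\C[G]$ containing the defining ideal of $\mathbb{S}^4$; since $\mathbb{S}^4$ is a single closed orbit, transitivity and the Nullstellensatz leave $\C[\mathbb{S}^4]$ with no invariant ideals other than $0$ and the whole ring, so the classical limit of $\mathcal{B}$ cannot be a proper quotient of $\C[\mathbb{S}^4]$.

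The upper bound you never carry out, and you concede it is the main obstacle; the paper does not attempt any normal-form or diamond-lemma computation either. Instead it uses the central character: the $U_\hbar(\g)$-invariants of $\C_\hbar[G]$ are central in $U_\hbar(\g)$ and hence act on the highest weight module $M_\la$ through $\chi_\la$, so $\mathcal{B}$ is a quotient of $\C_\hbar[G]/\langle\ker\chi_\la\rangle$, which by \cite{M1} is free over $\C[[\hbar]]$ and a direct sum of isotypical components of \emph{finite} multiplicities. This finiteness is not a luxury: your torsion-freeness observation shows only that the kernel $K$ of $\pi$ satisfies $K=\hbar K$, and over $\C[[\hbar]]$ that forces $K=0$ only when the relevant modules are $\hbar$-adically separated (e.g.\ finite componentwise), which is precisely what the \cite{M1} input supplies and what mere flatness of $\mathcal{B}$ does not. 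So the proposal is not a proof as it stands; to close it along the paper's lines you must replace the ``faithfulness'' step by the invariant-ideal/homogeneity argument and replace the undone normal form by the central-character flatness result, after which the componentwise Nakayama comparison finishes the identification $J=\ker\phi$.
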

\begin{proof}
The center of $\C_\hbar[G]$ is formed by $U_\hbar(\g)$-invariants, which are also central in $U_\hbar(\g)$. Therefore, $\ker \chi_\la$ lies in the kernel of the
representation $\C_\hbar[G]\to \End(M_{\la})$. The quotient of $\C_\hbar[G]$ by the ideal generated by $\ker \chi_\la$
is free over $\C[[\hbar]]$ and is a direct sum of isotypical $U_\hbar(\g)$-components of finite multiplicities, \cite{M1}.
Therefore, the
image of $\C_\hbar[G]$ in $\End(M_{\la})$ is a direct sum of isotypical $U_\hbar(\g)$-components
which are free and finite over $\C[[\hbar]]$.

The ideal in $\C_\hbar[G]$ generated by (\ref{q-rel}) lies in the kernel of
the homomorphism $\phi\colon\C_\hbar[G]\to \End(M_\la)$
and turns into the defining ideal of $\mathbb{S}^4$  modulo $\hbar$.
Therefore this ideal coincides with $\ker \phi$, and the quotient of  $\C_\hbar[G]$
by this ideal is a quantization of $\C[\mathbb{S}^4]$, see \cite{M2} for details.
\end{proof}
We will give a
more explicit description of $\C_\hbar[\mathbb{S}^4]$.
The matrix $\Q$ is the image of the matrix $K$ from Section \ref{S_qG} under the embedding $\C_\hbar[G]\to  U_\hbar(\g)$.
The algebra $\C_\hbar[\mathbb{S}^4]$ is generated by
elements $a,b,c,y,z$ arranged in the matrix
$$
\left(
\begin{array}{cccc}
a&b&y&0\\
c&-q^2a&0&-y\\
z&0&-q^2a&q^2b\\
0&-z&q^2c&q^4a\\
\end{array}
\right).
$$
This matrix is obtained from  $K$
by imposing the linear relations on its entries
derived from (\ref{ideal_quant_linear}) by the substitution $K^2=q^{-4}$, $\Tr_q(K)=0$.
The generators of $\C_\hbar[\mathbb{S}^4]$ obey the relations
$$
ab=q^2ba,\quad ac=q^{-2}ca, \quad ay=q^2ya,\quad az=q^{-2}za,
$$
$$
by=q^2yb,\quad bz=q^{-2}zb,\quad
cy=q^{2}yc,\quad cz=q^{-2}zc,
$$
$$
[b,c]=(q^4-1)a^2,
\quad [y,z]=(q^4-1)a^2+(q^4-1)bc,
$$
plus
$$
a^2+bc+yz=q^{-4},
$$
which is a deformation of  (\ref{sphere}).

Remark that $\C_\hbar[\mathbb{S}^4]$  has a 1-dimensional representation $a,b,c\mapsto 0$, $y,z\mapsto q^{-2}$.
Therefore it can be realized as a subalgebra in the Hopf algebra dual to $U_\hbar(\g)$, as explained in \cite{DM}.

\section{On invariant star product on $\mathbb{S}^4$}
\label{Shapovalov}
It follows from \cite{KST} that the star product on the conjugacy class $Sp(4)/Sp(2)\times Sp(2)$
can be calculated by means of the invariant pairing between the
modules $M_{-\la}^-$ and $M^+_\la$, where $M^+_\la=M_\la$ and
$M_{-\la}^-$ is its restricted dual. The module $M_{-\la}^-$ is the quotient
of the lower parabolic Verma module
$
\hat M^-_{-\la}= U_\hbar(\g)\tp_{ U_\hbar(\p^-)}\C_{-\la}
$
by the submodule $U_\hbar(\g)e_\dt v_{-\la}$.
Explicitly, the pairing is given by the assignment
$$
xv_{-\la}\tp yv_{\la}\mapsto \langle xv_{-\la},yv_{\la}\rangle= \la([\gm(x)y]_{\l}).
$$
Here $x\mapsto [x]_{\l}$ is the projection $U_\hbar(\g)\to  U_\hbar(\l)$
along $U_\hbar'(\n^-)U_\hbar(\g)+U_\hbar(\g)U_\hbar'(\n^+)$, where the prime designates the kernel of the counit.
This projection is facilitated by the triangular factorization (\ref{triangular_fact}) and it
 is a homomorphism of $U_\hbar(\l)$-bimodules.

The modules  $M_{\pm\la}^\pm$ are irreducible if and only if  the pairing is non-degenerate, \cite{Ja}.
Our next goal is to calculate it explicitly.
Put
$$
 x_1=e_\al, \quad x_2=e_{\gm}
,\quad
 \tilde x_1=e_\al, \quad \tilde x_2=q^4e_{\bt}e_{\al}-q^{2} e_{\al}e_{\bt},
\quad  y_1=f_\al, \quad y_2=f_{\gm}.
$$
The twiddled root vectors are related to non-twiddled via the antipode:
$$
\gm(\tilde e_{\gm})= q^4 q^{-h_\bt}e_\bt q^{-h_\al}e_\al-q^{2}q^{-h_\al}e_\al q^{-h_\bt}e_\bt=-q^{-h_\gm}e_\gm,
$$
with a similar relation for the root $\al$.

The following system of monomials constitute
bases in  $M_{-\la}^-$ and $M^+_\la$:
$$
\bigl(y^k_1 y^m_2v_\la\bigr)_{k,m=0}^\infty \subset M^+_\la
,\quad
\bigl(\tilde x^k_1 \tilde x^m_2v_\la\bigr)_{k,m=0}^\infty \subset M_{-\la}^-.
$$
Further we need the identities
\be
[e_\al, f_{\gm}]=-(q+q^{-1})q^{-2}f_{\bt}q^{-h_{\al}},\quad [e_\bt, f_{\gm}]=f_{\al}q^{h_{\bt}},
\\[1pt]
 [e_\gm, f_\al]=-(q+q^{-1})e_\bt q^{h_\al},
 \quad
  [e_\gm, f_\bt]=q^2e_\bt q^{-h_\bt},
\ee
which can be proved directly from the defining relations $U_\hbar(\g)$ and the definition of
$e_\gm$ and $f_\gm$. Also, one can check that
\be
 [e_{\gm}, f_{\gm}]=\frac{q^{h_{\gm}}-q^{-h_{\gm}}}{q-q^{-1}}.
\ee
Thus, for $\nu=\al, \gm$ and any positive integer $k$ we have
\be
 [e_{\nu}, f_{\nu}^k]= q^{h_\nu+1}\frac{1-q^{-2k}}{(q-q^{-1})^2}+q^{-h_\nu-1}\frac{1-q^{2k}}{(q-q^{-1})^2}
.
\ee
\begin{lemma}
The matrix coefficient
$
\langle \tilde x_1^i\tilde x_2^jv_{-\la}, y_1^k,y_2^mv_{\la}\rangle
$
is zero unless $i=k$, $j=m$.
\end{lemma}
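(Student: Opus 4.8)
The statement is purely a weight (root-lattice grading) assertion, and I would prove it as such, without computing any of the surviving coefficients. The whole argument reduces to observing that the functional $x\mapsto\la([\gm(x)\,\cdot\,]_\l)$ only sees the zero-weight part of $U_\hbar(\g)$, and then reading off when $\gm(\tilde x_1^i\tilde x_2^j)y_1^ky_2^m$ lands in that part.

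First I would equip $U_\hbar(\g)$ with its grading by the root lattice $Q=\Z\al\oplus\Z\bt$, assigning $e_\mu$ degree $\mu$, $f_\mu$ degree $-\mu$, and the Cartan generators degree $0$. Multiplication is additive for this grading, and the antipode, being an algebra anti-homomorphism fixing the degrees of the generators (indeed $\gm(e_\mu)=-q^{-h_\mu}e_\mu$ and $\gm(f_\mu)=-f_\mu q^{h_\mu}$ keep the degrees $\mu$ and $-\mu$, since $q^{\pm h_\mu}$ has degree $0$), preserves the $Q$-grading. Both summands of the decomposition $U_\hbar(\g)=U_\hbar(\l)\oplus\bigl(U_\hbar'(\n^-)U_\hbar(\g)+U_\hbar(\g)U_\hbar'(\n^+)\bigr)$ underlying the triangular factorization (\ref{triangular_fact}) are $Q$-graded, so the projection $[\,\cdot\,]_\l$ is degree-preserving; moreover $U_\hbar(\l)$ is supported only in degrees $\Z\bt$, the root system of $\l$ being $\{\pm\bt\}$.

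Next I would read off the degree of the argument. From $\deg\tilde x_1=\al$, $\deg\tilde x_2=\gm=\al+\bt$ (both $e_\bt e_\al$ and $e_\al e_\bt$ have degree $\al+\bt$), $\deg y_1=-\al$, $\deg y_2=-\gm$, together with the degree-preservation of $\gm$, the element $\gm(\tilde x_1^i\tilde x_2^j)y_1^ky_2^m$ carries degree
$$
\bigl((i+j)-(k+m)\bigr)\al+(j-m)\bt.
$$
I would then conclude that $\la\circ[\,\cdot\,]_\l$ annihilates every nonzero-degree homogeneous component: a nonzero $\al$-component places the element outside $\Z\bt$, so the projection itself vanishes, while a nonzero $\bt$-component yields an element of $U_\hbar(\l)$ whose every PBW monomial carries a net power of $e_\bt$ or $f_\bt$ and is therefore killed by the character $\la$, which sends $e_\bt,f_\bt\mapsto 0$. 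Hence the matrix coefficient can be nonzero only when both components above vanish, that is $i+j=k+m$ and $j=m$, which forces $i=k$ and $j=m$.

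I do not expect a genuine obstacle here, since the result is grading bookkeeping rather than computation. The only two points needing care are the degree-homogeneity of the projection $[\,\cdot\,]_\l$ (which follows from the compatibility of (\ref{triangular_fact}) with the $Q$-grading) and the vanishing of $\la$ on the nonzero-$\bt$-degree part of $U_\hbar(\l)$; both are immediate once the grading is fixed, and neither uses the specific form of $\tilde x_2$ beyond its weight.
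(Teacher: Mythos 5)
Your proof is correct, but it takes a genuinely different route from the paper's. You reduce the whole statement to root-lattice bookkeeping: the antipode and the projection $[\,\cdot\,]_\l$ preserve the grading by $\Z\al+\Z\bt$, the subalgebra $U_\hbar(\l)$ is supported in degrees $\Z\bt$, and the character $\la$ kills every element of $U_\hbar(\l)$ of nonzero $\bt$-degree; since $\al$ and $\bt$ are linearly independent, the degree $(i+j-k-m)\al+(j-m)\bt$ of $\gm(\tilde x_1^i\tilde x_2^j)y_1^ky_2^m$ must vanish in both components, forcing $i=k$ and $j=m$. The paper argues computationally instead: it uses the previously derived identities (in particular that $[e_\al,f_\gm^m]$ lies in the left ideal $U_\hbar(\g)f_\bt$, so $e_\al f_\gm^m v_\la=0$) to pull $x_1=e_\al$ and then $x_2=e_\gm$ across $y_1^k y_2^m v_\la$, lowering the powers one step at a time, until either a leftover positive root vector annihilates $v_\la$ (when $i>k$ or $j>m$) or a leftover lowering operator annihilates $v_{-\la}$ on the left (when $i<k$ or $j<m$). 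Your argument is cleaner and more general: it uses nothing about the specific commutation relations among the root vectors and would apply verbatim to any pairing of this parabolic type. What the paper's approach buys is reusability: the same pulling-through manipulations, with the scalar terms kept rather than discarded, are exactly the computation that yields the nonzero diagonal matrix coefficients in the Proposition immediately following the lemma, which a pure grading argument cannot provide.
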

\begin{proof}
It follows that $[e_\al,f_\gm^k]$ belongs to the
left ideal $U_\hbar(\g)f_\bt$, hence $x_1 y_2^kv_\la=0$.
We have
$$
x_1 y_1^ky_2^m v_\la =e_\al f_\al^kf_\gm^m v_\la\sim
f_\al^{k-1}f_\gm^m v_\la+f_\al^{k-1}[e_\al,f_\gm^m] v_\la=f_\al^{k-1}f_\gm^m v_\la=y_1^{k-1}y_2^m v_\la.
$$
Using this, we find
$$
\langle \tilde x_1^i\tilde x_2^jv_{-\la}, y_1^k,y_2^mv_{\la}\rangle
=
\langle v_{-\la}, x_2^j x_1^iy_1^ky_2^mv_{\la}\rangle
\sim
\langle v_{-\la}, x_2^j x_1^{i-k}y_2^mv_{\la}\rangle   =0,
$$
assuming $i>k$.
If $i<k$, then $y_1$ is pulled to the left in a similar way, so
the matrix coefficient can be non-zero only if $i=k$.
Suppose $i=k$ but $j>m$.
Then
$$
\langle \tilde x_1^k\tilde x_2^jv_{-\la}, y_1^k,y_2^mv_{\la}\rangle
\sim
\langle v_{-\la}, x_2^k x_1^iy_1^ky_2^mv_{\la}\rangle
\sim
\langle v_{-\la}, x_2^j y_2^mv_{\la}\rangle \sim
\langle v_{-\la}, x_2^{j-m}v_{\la}\rangle =0.
$$
The case $i=k$, $j<m$ is verified similarly.
\end{proof}
Define $[2l]_q!!=\prod_{i=1}^{l}[2l]_q$ for all positive integer $l$ and put $[0]_q!!=1$.
\begin{propn}
The matrix coefficients of the invariant pairing are given by the formula
$$
\langle \tilde x_1^k\tilde x_2^mv_{-\la}, y_1^k     y_2^mv_{\la}\rangle=
q^{m(m-2)+k(k-2)}
\Bigl(\frac{1}{q-q^{-1}}\Bigr)^{k+m}[2k]_q!![2m]_q!!,
$$
for all $k,m=0,1,\ldots$.
\end{propn}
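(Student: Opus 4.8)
The plan is to combine the vanishing statement of the preceding Lemma with a pair of commuting recursions that peel the raising generators off the first (dual) argument one at a time. The preceding Lemma already guarantees that only the diagonal coefficients $P_{k,m}:=\langle \tilde x_1^k\tilde x_2^m v_{-\la}, y_1^k y_2^m v_\la\rangle$ can be nonzero, so it suffices to evaluate these. The base case is immediate, $P_{0,0}=\la([1]_\l)=1$.

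First I would record the exact contravariance of the pairing. Writing $x=u x'$ in the defining formula $\langle x v_{-\la}, y v_\la\rangle=\la([\gm(x)y]_\l)$ and using that $\gm$ is an algebra anti-homomorphism gives
$$
\langle u\,\xi, w\rangle=\langle \xi,\gm(u)\,w\rangle,\qquad \xi=x'v_{-\la},\ w=yv_\la .
$$
This is the tool that converts a raising generator acting on $M_{-\la}^-$ into its antipode acting on $M_\la^+$, where the action is explicitly known.

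Next I would run the $k$-recursion. Peeling the leftmost $\tilde x_1=e_\al$ and applying the contravariance turns $P_{k,m}$ into a matrix coefficient in which $\gm(e_\al)=-q^{-h_\al}e_\al$ acts on $f_\al^k f_\gm^m v_\la$. By the action formula established in the proof of irreducibility of $M_\la$, the operator $e_\al$ lowers the power of $f_\al$ by one and contributes the $\s\l(2)$-type factor proportional to $\frac{q^{2k}-q^{-2k}}{(q-q^{-1})^2}=\frac{[2k]_q}{q-q^{-1}}$, after which the remaining $q^{-h_\al}$ is evaluated on the weight $\la-(k-1)\al-m\gm$. Because $(\al,\gm)=0$, this eigenvalue is independent of $m$, so the $k$- and $m$-reductions decouple. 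Iterating $k$ times collapses the first argument to $\tilde x_2^m v_{-\la}$ and the second to $f_\gm^m v_\la$, and multiplies out $\prod_{i=1}^k[2i]_q=[2k]_q!!$ together with an accumulated power of $q$. The leftover coefficient $\langle \tilde x_2^m v_{-\la}, f_\gm^m v_\la\rangle$ is then handled by the identical argument for the root $\gm$: one uses $\gm(\tilde x_2)=-q^{-h_\gm}e_\gm$ and the commutator identity $[e_\gm,f_\gm^m]$ recorded above, together with $(\gm,\la)=(\al,\la)$ (since $\la\perp\bt$), to produce the factor $[2m]_q!!$ and its companion power of $q$. Multiplying the two recursions assembles the stated product of double factorials and the factor $(q-q^{-1})^{-(k+m)}$.

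The main obstacle is the exact bookkeeping of the powers of $q$ that combine into the prefactor $q^{m(m-2)+k(k-2)}$. At each step two sources of $q$-powers must be reconciled: the explicit reduction factor coming from the $\s\l(2)$-triple, and the scalar by which $q^{-h_\mu}$ (from the antipode) acts on the shifted weight. The singular-weight condition $q^{2(\al,\la)}=-q^{-2}$ is precisely what cancels the $(\al,\la)$-dependence and converts the surviving exponents into the stated quadratic in $k$ and $m$. Keeping these exponents straight, and confirming that the $\tilde x_1$- and $\tilde x_2$-recursions genuinely commute (which rests on the orthogonality $(\al,\gm)=0$), is the only delicate point; everything else is the telescoping of the resulting product.
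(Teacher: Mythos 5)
Your proposal is essentially the paper's own proof: the paper likewise converts the twiddled generators into $-q^{-h_\al}e_\al$ and $-q^{-h_\gm}e_\gm$ via the antipode, transports them across the pairing onto $f_\al^k f_\gm^m v_\la$, and evaluates using the $[e_\nu,f_\nu^k]$ identities, the vanishing $e_\al f_\gm^m v_\la=0$, the orthogonality $(\al,\gm)=0$ together with $(\gm,\la)=(\al,\la)$, and the specialization $q^{2(\al,\la)}=-q^{-2}$. The only difference is organizational: you evaluate the Cartan factors step by step inside a recursion $P_{k,m}\to P_{k-1,m}\to\cdots$, whereas the paper normal-orders all of them to the left in a single pass, producing the one prefactor $c=(-1)^{k+m}q^{(k+m)(\al,\la)+m(m-1)+k(k-1)}$ before running the $e$--$f$ reduction.
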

\begin{proof}
The matrix coefficient $\langle \tilde x_1^k\tilde x_2^mv_{-\la}, y_1^k     y_2^mv_{\la}\rangle$ is equal to
$$
\langle v_{-\la},(-q^{-h_\gm}e_\gm)^m (-q^{-h_\al}e_\al)^kf_\al^k,f_\gm^mv_{\la}\rangle=
c\langle v_{-\la},e_\gm^m e_\al^kf_\al^kf_\gm^mv_{\la}\rangle,
$$
where $c=(-1)^{k+m}q^{(k+m)(\al,\la)+m(m-1)+k(k-1)}$.
Further,
$
\langle v_{-\la}, e_\gm^m e_\al^k f_\al^k f_\gm^m v_{\la}\rangle
$
is found to be
$$
\langle v_{-\la}, e_\gm^m e_\al^{k-1}
f^{k-1}_{\al}\Bigl( q^{h_\al+1}\frac{1-q^{-2k}}{(q-q^{-1})^2}+q^{-h_\al-1}\frac{1-q^{2k}}{(q-q^{-1})^2}\Bigr) f_\gm^m v_{\la}\rangle+\ldots
$$
The omitted term is zero, as it involves   $e_\al f_\gm^m v_{\la}=0$.
We continue in this way and get
$$
[k]_q!\prod_{i=1}^k[(\al,\la)+1-i]_q[m]_q!\prod_{i=1}^m[(\al,\la)+1-i]_q
=[2k]_q!![2m]_q!!
\Bigl(\frac{q^{(\al,\la)}q}{q-q^{-1}}\Bigr)^{k+m},
$$
since
$
[(\al,\la)+1-i]_q=\frac{q^{(\al,\la)+1-i}-q^{-(\al,\la)-1+i}}{q-q^{-1}}
=q^{(\al,\la)+1}\frac{q^{-i}+q^{i}}{q-q^{-1}}
=\frac{q^{(\al,\la)+1}[2i]_q}{(q-q^{-1})[i]_q}
    $.
Combining this result with the multiplier $c$ calculated earlier and taking into account $q^{2(\al,\la)}=-q^{-2}$
we prove the statement.
\end{proof}

Let $\C_\hbar[G_{DS}]$ denote the affine coordinate ring on the quantum
group $Sp_q(4)$, i.e. the quantization of the algebra of polynomial functions
on $G$ along the Drinfeld-Sklyanin bracket. It is the Hopf dual to the quantized universal enveloping algebra
$U_\hbar(\g)$, and the reader should not confuse it with $\C_\hbar[G]$ defined
in Section \ref{S_qG}. It is known that the multiplication  in $\C_\hbar[G]$, call it
$\cdot_\hbar\>$,
is a star product, \cite{T}.
Denote by $\C_\hbar[G_{DS}]^\l$ the subalgebra
of $U_\hbar(\l)$-invariants in $\C_\hbar[G]$ under the left co-regular action.
It is a natural right $U_\hbar(\g)$-module algebra under the right co-regular action and is
also a quantization a closed conjugacy class (quotient by the  Levi subgroup).

The  Shapovalov form on $\hat M_\la$ is invertible for non-special $\la$ and its inverse
provides an associative multiplication on $\C_\hbar[G_{DS}]^\l$. For the special value of
$\la$, it has a pole, while its regular part still defines an associative multiplication
on a certain subspace of $\C_\hbar[G_{DS}]$, as argued in \cite{KST}. Description of this subspace
boils down to description of the "stabilizer" of the quantum conjugacy class. It should
be an appropriate deformation of $U(\k)\supset U(\l)$ within $U_\hbar\bigl(\s\p(4)\bigr)$,
where  $\k=\s\p(2)\oplus \s\p(2)$. The algebra  $U_\hbar(\k)$ is unknown to us at present and will be a subject of our future research.

The stabilizer $U_\hbar(\k)$ will determine the subspace $\C_\hbar[G_{DS}]^\k$
of its invariants that supports the associative multiplication
\be
f\tp g\mapsto f*_\hbar g=\sum_{k,m=0}^\infty
q^{-m(m-2)-k(k-2)}\frac{(q-q^{-1})^{k+m}}{[2k]_q!![2m]_q!!}
\bigl(\tilde x_1^{k}\tilde x_{2}^{m}f\bigr)\cdot_\hbar\bigl(y_1^{k} y_{2}^{m}g\bigr),
\label{star}
\ee
for $f,g\in \C_\hbar[G_{DS}]^\k$. This multiplication will be a star-product
on $\C[\mathbb{S}^4]$.

Remark that product (\ref{star}) is not perfectly explicit
because it is expressed through $\cdot_\hbar$,
whose explicit expression through the classical multiplication in $\C[G]$ is unknown.
Also, the new multiplication should be isomorphic to $\cdot_\hbar$,
because $\mathbb{S}^4$ has a unique structure of
Poisson manifold over the Poisson group $G$.
Thus, neither (\ref{star}) nor (\ref{q-rel}) are particularly new with regard to {\em abstract} quantization.
For instance, one can apply the method of characters (which is doable in this special case)
and realize the quantized polynomial algebra on $\mathbb{S}^4$ both as a quotient of $\C_\hbar[G]$
and a subalgebra in $\C_\hbar[G_{DS}]$, \cite{DM}. Alternatively, one can quantize $\mathbb{S}^4$
through the quantum plane, along the lines of \cite{FRT}.
The novelty of the present work is a realization of $\C[\mathbb{S}^4]$ by operators on a highest weight module.
This approach admits far reaching generalization that unites the
non-Levi conjugacy classes and the classes
with Levi isotropy subgroups in a common quantization context.
The general approach to quantization of non-Levi conjugacy classes of simple
complex matrix groups will be the subject of our future research.

\vspace{20pt}
{\bf Acknowledgements}.
This research is supported in part by the RFBR grant 09-01-00504. The author is grateful
to the Max-Plank Institute for Mathematics for hospitality.

\end{document}